\newtheorem{thm}{Theorem}
\newtheorem*{thm*}{Theorem}
\newtheorem{example}{Example}
\newtheorem{proposition}{Proposition}
\newtheorem{lemma}[proposition]{Lemma}
\newtheorem{corollary}[proposition]{Corollary}
\newtheorem{mydef}{Definition}
\newtheorem{remark}{Remark}
\theoremstyle{definition}
\DeclareMathOperator{\CAT}{CAT}
\title{Weak topologies for unbounded nets in CAT($0$) spaces} %{\textbf Weak topology in Hadamard spaces}
\author[1]{Philip Miller}
\author[2]{Arian B\"erd\"ellima}
\author[1]{Max Wardetzky}
\address[1]{%
Institute for Numerical and Applied Mathematics, University of G\"ottingen, 37083 G\"ottingen, Germany}
\address[2]{%
Institute of mathematics, Technical University Berlin, 10623 Berlin, Germany}
\address[3]{%
Institute for Numerical and Applied Mathematics, University of G\"ottingen, 37083 G\"ottingen, Germany}
\begin{document}
\begin{abstract}
Weak topologies that yield weak convergence for \emph{bounded} sequences and nets in $\CAT(0)$ spaces have been studied in the past. We are here concerned with weak topologies that yield weak convergence of \emph{unbounded} sequences and nets. We analyze two such topologies that generalize the weak topology on Hilbert spaces and that agree with the strong topology on a  $\CAT(0)$ space if and only if the space is locally compact. 
\end{abstract} 

\keywords{Hadamard space, weak convergence, weak topologies.}

\maketitle

\section{introduction}
A geodesic metric space $(H,d)$ is called a $\CAT(0)$ space if every geodesic triangle in $X$ is at least as {\em slim} as its corresponding comparison triangle in the Euclidean plane. %We assume basic familiarity with CAT($0$) spaces, and we only consider complete CAT($0$) spaces here. 
For a detailed study of these spaces, %and the important role they play in mathematics one could refer to 
see, e.g.,~\cite{Alexander2019,Bacak,Brid,Burago}. Throughout this article, we only consider \emph{complete} $\CAT(0)$ spaces, which are often called \emph{Hadamard} spaces.

The notion of \emph{weak convergence} on $\CAT(0)$ spaces has appeared in various contexts over the past years, see, e.g.,~\cite{Bacak, Espi, Jost, Kell, Kirk, Lim, Monod, Sosov}.
Recently, Lytchak and Petrunin~\cite{lytchak2021weak} resolved some open problems concerning the existence of topologies that realize weak convergence for \emph{bounded} sequences and nets in $\CAT(0)$. We are here concerned with similar questions \emph{without} the assumption of boundedness.
\begin{mydef}\label{def:weak-conv}
Let  $(H,d)$ be a $\CAT(0)$ space, and let $x,y \in H$. Denote by $[x,y]$ the unique geodesic connecting $x$ and $y$, and let $P_{[x,y]}z$ denote the closest point projection of $z \in H$ onto $[x,y]$ with respect to $d$. A net (in particular a sequence) $(x_\alpha)\subseteq H$  {\em converges weakly} to $x\in H$ if $P_{[x,y]}x_\alpha \to x$ with respect to $d$ for all $y\in H\setminus \{x\}$. 
\end{mydef}

For Hilbert spaces the above notion of weak convergence coincides with convergence in the usual weak topology. We are concerned with topologies $\tau$ on general CAT(0) spaces that \emph{realize weak convergence of nets}, by which we mean that a net weakly converges  to $x$ if and only if it converges to $x$ with respect to $\tau$. Since any sequence is a net, a topology that realizes weak convergence of nets also realizes weak convergence of sequences.

For \emph{bounded} nets (or sequences), the above definition is identical to the one given by Lytchak and Petrunin in~\cite{lytchak2021weak}, who show the following result:

\begin{thm*}[\cite{lytchak2021weak}]
Let $(H,d)$ be a $\mathrm{CAT(0)}$ space. There exists a unique topology $\tau_{LP}$ on $H$ with the following two properties:

\begin{enumerate}[i.]
\item A sequence $(x_{n})$ converges in $H$ with respect to $\tau_{LP}$ to a point $x$ if and only if the sequence is bounded and converges to $x$ weakly in the sense of Definition~\ref{def:weak-conv}.
\item The topology $\tau_{LP}$ is sequential.
\end{enumerate}
\end{thm*}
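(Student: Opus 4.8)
The plan is to build $\tau_{LP}$ directly from the convergence relation and then verify the two required properties, isolating the genuinely geometric input in a single step. Write $\mathcal{C}$ for the relation ``$(x_n)$ is bounded and converges weakly to $x$'' in the sense of Definition~\ref{def:weak-conv}. I would declare a set $U\subseteq H$ to be open precisely when it is \emph{sequentially open} with respect to $\mathcal{C}$, i.e.\ whenever $(x_n)\xrightarrow{\mathcal C} x$ and $x\in U$ then $x_n\in U$ eventually; equivalently, the closed sets are those $A$ containing the $\mathcal{C}$-limit of every $\mathcal{C}$-convergent sequence drawn from $A$. That this family is closed under arbitrary unions and finite intersections is routine (the finite-intersection case uses that a subsequence of a $\mathcal{C}$-convergent sequence is again $\mathcal{C}$-convergent with the same limit), so it defines a topology $\tau_{LP}$.

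Two facts are immediate from the construction. First, if $(x_n)\xrightarrow{\mathcal C}x$ then $x_n\to x$ in $\tau_{LP}$, since every $\tau_{LP}$-open neighbourhood of $x$ eventually contains $x_n$; consequently the class of $\tau_{LP}$-convergent sequences \emph{contains} the class of $\mathcal{C}$-convergent ones. Sequentiality of $\tau_{LP}$ then follows formally: a $\tau_{LP}$-sequentially open set is a fortiori sequentially open with respect to the smaller class $\mathcal{C}$, hence open, so every $\tau_{LP}$-sequentially open set is open. Uniqueness is equally formal: in any sequential space the closed sets are exactly the sequentially closed sets, so two sequential topologies with the same convergent sequences have the same closed sets and therefore coincide.

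The substance of the theorem is the converse inclusion: every $\tau_{LP}$-convergent sequence is bounded and converges weakly. Boundedness I would obtain by a separation argument. If $(x_n)\to x$ in $\tau_{LP}$ were unbounded, pass to a subsequence with $d(x_{n_k},o)\to\infty$ for a fixed basepoint $o\in H$; the set $\{x_{n_k}:x_{n_k}\neq x\}$ is $\tau_{LP}$-closed, because any $\mathcal{C}$-convergent sequence drawn from it is bounded, hence takes only finitely many values, so its limit lies in the set. Its complement is then an open neighbourhood of $x$ omitting a tail of the subsequence, contradicting convergence. For weak convergence, suppose instead that $x_n\to x$ in $\tau_{LP}$ but $P_{[x,y]}x_n\not\to x$ for some $y\neq x$; then along a suitable subsequence the projections stay at distance at least $\varepsilon$ from $x$.

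Here is where the two geometric inputs enter, and this is the step I expect to be the main obstacle. I would invoke (i) uniqueness of weak limits in a $\CAT(0)$ space and (ii) weak sequential compactness of bounded sets, namely that every bounded sequence in a Hadamard space has a weakly convergent subsequence (via asymptotic centres). Applying (ii) to the bounded subsequence above yields a further subsequence $\mathcal{C}$-converging to some $z$; since its projections onto $[x,y]$ remain at distance $\geq\varepsilon$ from $x$, its weak limit cannot be $x$, so $z\neq x$ by the defining property of weak convergence. By the easy direction this subsequence $\tau_{LP}$-converges to $z$, while, being a subsequence of $(x_n)$, it also $\tau_{LP}$-converges to $x$; uniqueness of weak limits, transported to the level of $\tau_{LP}$-sequential limits by the same kind of separation argument used for boundedness, forces $z=x$, a contradiction. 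The care needed in handling repeated values and in promoting uniqueness of $\mathcal{C}$-limits to uniqueness of $\tau_{LP}$-sequential limits is where most of the bookkeeping will lie.
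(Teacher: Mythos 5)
Your proposal is correct, but there is no internal proof to measure it against: the paper imports this theorem from Lytchak--Petrunin \cite{lytchak2021weak} without proof, recording only the description of the topology (a set $A\subseteq H$ is $\tau_{LP}$-closed if and only if it contains the weak limit of every bounded sequence in $A$ that converges weakly) --- which is exactly the topology you construct. Your formal steps are sound: the family of sequentially open sets for the class $\mathcal{C}$ is a topology, $\mathcal{C}$-convergence implies $\tau_{LP}$-convergence by construction, sequentiality follows because a $\tau_{LP}$-sequentially open set is in particular $\mathcal{C}$-sequentially open, and uniqueness is the standard fact that two sequential topologies with the same convergent sequences have the same (sequentially) closed sets. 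The hard direction is also correctly organized, and the bookkeeping you defer does close. For boundedness: since $d(x_{n_k},o)\to\infty$, any bounded sequence in $\{x_{n_k}\}$ takes finitely many values, some value $v$ occurs infinitely often, and the constant subsequence at $v$ forces the weak limit to equal $v$ (if $p\neq v$ then $P_{[p,v]}v=v\neq p$), so the set is $\tau_{LP}$-closed. For weak convergence: the set you need is $A=\{w_j\colon w_j\neq x\}\cup\{z\}$, where $(w_j)$ is the sub-subsequence weakly converging to $z\neq x$ supplied by Kirk's weak sequential compactness; $A$ is $\mathcal{C}$-sequentially closed because a bounded weakly convergent sequence in $A$ either repeats some value infinitely often (whence its limit is that value) or admits a further subsequence whose terms are distinct $w_j$'s with strictly increasing indices, hence a subsequence of $(w_j)$, whose limit must be $z$ by uniqueness of weak limits; since $x\notin A$, the $\tau_{LP}$-convergence $w_j\to x$ forces $w_j=x$ eventually, which contradicts $z\neq x$ again by uniqueness of weak limits. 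Note that this closedness statement is precisely the sequential, bounded analogue of the proposition the paper proves for $\tau_w$ (that $\{x_n\colon n\in\mathbb{N}\}\cup\{x\}$ is $\tau_w$-closed whenever $x_n\to x$ weakly), so your argument runs parallel to machinery the paper develops for $\tau_w$, transplanted to $\tau_{LP}$; the only nontrivial geometric inputs, which you correctly isolate, are uniqueness of weak limits and weak sequential compactness of bounded sequences.
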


With reference to the above theorem, recall that a topology $\tau$ on a set $X$ is called \emph{sequential} if every set $A \subseteq X$ that is $\tau$-sequentially closed is $\tau$-closed. The topology $\tau_{LP}$ can be defined as follows: 
A subset $A \subseteq H$ is $\tau_{LP}$-closed if and only if for any bounded sequence $(x_n) \subseteq A$
weakly converging to some point $x \in H$, one has that $x\in A$.

Our interest in unbounded nets (and sequences) is in parts motivated by an attempt to generalize weak topologies on Hilbert spaces to CAT(0) spaces. Since  the weak topology on an infinite dimensional Hilbert space is never sequential (see, e.g., Example~\ref{ex:Hilbert}), $\tau_{LP}$ does not agree with the usual weak topology due to Property $(ii)$ of the above theorem. 

Another source of motivation for studying the unbounded case is the following example due to Monod~\cite[Example 21]{Monod}:
\begin{example}[infinite spike]\label{ex:spike}
Consider the $\mathrm{CAT(0)}$ space $(H,d)$, which consists of the family of intervals $I_n=[0,n]$, $n\in \mathbb{N}$,  glued at the common vertex 0.   Consider the unbounded sequence $(x_n)$, where $x_n$ is the nonzero endpoint in $I_n$. Then $(x_n)$ converges weakly to $0$ since every point $y\in H\setminus\{0\}$ belongs to $I_N$ for some unique $N \in \mathbb{N}$, and $P_{[0,y]}x_n=0$ for all $n>N$. Being unbounded, however, this sequence does converge with respect to $\tau_{LP}$. 
%For every $\gamma\in \Gamma_0(H)$ there exists a $N\in \mathbb{N}$ such that $P_\gamma x=0$ for all $x\in I_n$ with $n\geq N$.  
\end{example} 
Below we construct two weak topologies on CAT(0) spaces that agree with the usual weak topology on Hilbert spaces and that also realize the convergence of the sequence in Example~\ref{ex:spike}.

\subsection{Definitions of weak topologies}
Consider a CAT(0) space $(H,d)$. Let $\Gamma(H)$ be the set of all compact geodesics in $H$.  For $\gamma\in \Gamma(H)$ and $x\in H$ denote by $P_\gamma x$ the projection of $x$ to $\gamma$. For $x,y \in H$, let $[x,y]$ denote the unique geodesic connecting $x$ to $y$.
%Moreover we denote the set of all $\gamma\in \Gamma(H)$ with $\gamma(0)=x$ by $\Gamma_x(H)$.\\ 
We define % $\varepsilon>0$ and $\gamma \in \Gamma_{x}(H)$. We define 
\begin{align*}
U_{x}(y):=\{z \in H: P_{[x,y]} z \neq y\} ,
\end{align*}
%where  $P_{[x,y]}z$ denotes the projection of $z$ onto the unique geodesic $[x,y]$ starting at $x$ ending at $y$.  
and we call $U_{x}(y)$ an {\em elementary set} around $x$.  
The following definition introduces two topologies related to these elementary sets. 
\begin{mydef}
\begin{enumerate}[i.]
\item Let $\tau_w$ be the topology defined by $U\in \tau_w$ if and only if for all $x\in U$ there exist finitely many points $y_1, \ldots y_n\in  H\setminus \{x\} $ such that  
\[ U_x(y_1)\cap \ldots \cap U_x(y_n) \subseteq U. \]
\item Let $\tau_g$ be the the coarsest topology containing $U_x(y)$ for all  $x,y\in H$ with $x\neq y$.
\end{enumerate}
\end{mydef}
The construction of $\tau_{w}$ in this definition was suggested by Ba\v cak~\cite{Bacak3} and was also studied in the thesis of the second author.

Besides these topologies we also consider the so-called \emph{coconvex topology} $\tau_{co}$ introduced by Monod~\cite{Monod}. This topology is defined as the coarsest topology on $H$ such that metrically closed convex sets are $\tau_{co}$-closed. Moreover, we denote by $\tau_{d}$ the metric topology.

\subsection{Main results} 
One always has the  inclusions
\[\tau_{co}\subseteq \tau_w\subseteq \tau_g\subseteq \tau_d, \]
see Proposition~\ref{prop:relations}. In particular, for convex sets it is equivalent to be closed with respect to $\tau_{w}$, $\tau_{g}$, and $\tau_{d}$. Moreover, we show that one always has that 
$\tau_{w}\subseteq \tau_{LP}$
and  that $\tau_{w}$ and $\tau_{LP}$ agree on \emph{bounded} CAT(0) spaces, see Corollary~\ref{cor:tau-w-tau-LP} and Lemma~\ref{lem:tau-w-tau-LP}.

Due to Proposition 1.3 in~\cite{lytchak2021weak}, one cannot in general expect the existence of a topology that realizes weak convergence of nets (not even in the bounded case). However, we show the following result:  

\begin{thm}\label{thm:tau-w-unique}
If there exists a topology on $H$ that realizes weak convergence of nets, then this topology is equal to $\tau_{w}$.
\end{thm}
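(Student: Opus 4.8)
The plan is to prove that, for the hypothetical topology $\tau$ realizing weak convergence of nets, the finite intersections $U_x(y_1)\cap\dots\cap U_x(y_n)$ form a neighborhood basis at every point $x$. Once this is established, the $\tau$-open sets are exactly the sets that contain such a finite intersection around each of their points, which is verbatim the definition of $\tau_w$; hence $\tau=\tau_w$, and in particular such a $\tau$ is unique. Throughout I would use the elementary fact that in any topological space a set $N$ is a neighborhood of $x$ if and only if every net converging to $x$ is eventually in $N$, combined with the standing hypothesis that $\tau$-convergence coincides with weak convergence.

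First I would check that each elementary set $U_x(y)$ is a $\tau$-neighborhood of $x$. By the fact just quoted it suffices to see that every net weakly converging to $x$ is eventually in $U_x(y)$; but weak convergence gives $P_{[x,y]}x_\alpha\to x\neq y$, so $P_{[x,y]}x_\alpha\neq y$ eventually, i.e.\ $x_\alpha\in U_x(y)$ eventually. Since a finite intersection of neighborhoods is a neighborhood, each $U_x(y_1)\cap\dots\cap U_x(y_n)$ is a $\tau$-neighborhood of $x$.

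The substantial step is the converse: every $\tau$-neighborhood $N$ of $x$ contains some finite intersection of elementary sets. I would argue by contradiction. If no finite intersection lies in $N$, then for every finite $F\subseteq H\setminus\{x\}$ one may pick $z_F\in\big(\bigcap_{y\in F}U_x(y)\big)\setminus N$, and ordering these by inclusion of the (directed) index sets yields a net $(z_F)$. The key claim is that $(z_F)$ converges weakly to $x$. Fixing $y\neq x$ and $\epsilon\in(0,d(x,y)]$, let $y_\epsilon$ be the point of $[x,y]$ at distance $\epsilon$ from $x$. A short $\CAT(0)$ computation, using convexity of the distance function to $z_F$ along $[x,y]$, shows that if $d(P_{[x,y]}z_F,x)\geq\epsilon$ then the projection onto the subgeodesic $[x,y_\epsilon]$ is the endpoint $y_\epsilon$; equivalently $U_x(y_\epsilon)\subseteq\{z:d(P_{[x,y]}z,x)<\epsilon\}$. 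Since $z_F\in U_x(y_\epsilon)$ whenever $y_\epsilon\in F$, we obtain $P_{[x,y]}z_F\to x$ for every $y$, which is exactly weak convergence. By hypothesis $(z_F)$ then converges to $x$ in $\tau$, hence is eventually in the $\tau$-neighborhood $N$, contradicting $z_F\notin N$.

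The main obstacle is this last step, and within it the geometric lemma that a single ``small'' elementary set $U_x(y_\epsilon)$ detects the distance of the projection onto the \emph{full} geodesic $[x,y]$: this is what upgrades the cheap information $P_{[x,y]}z_F\neq y$ (bare membership in elementary sets) to genuine weak convergence $P_{[x,y]}z_F\to x$. Note that this lemma is unconditional; the existence hypothesis is used only afterward, to convert the weak convergence of $(z_F)$ into $\tau$-convergence and thereby force the contradiction. The remaining work is bookkeeping, namely verifying that the indexing family of finite subsets of $H\setminus\{x\}$ is directed and that the net $(z_F)$ is legitimately defined.
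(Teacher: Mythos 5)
Your proof is correct and takes essentially the same route as the paper: your step showing elementary sets are $\tau$-neighborhoods is the paper's Proposition~\ref{prop:w-convergence}, and your contradiction argument---the net indexed by finite subsets of $H\setminus\{x\}$ together with the geometric fact that $U_x(y_\varepsilon)\subseteq\{z: d(P_{[x,y]}z,x)<\varepsilon\}$---is exactly the construction in Proposition~\ref{lem:w-closed-sets} combined with Lemma~\ref{prop:elementary_incl}. The only difference is packaging: the paper phrases the two halves via closed sets and continuity of the identity map (Corollary~\ref{thm:uniqueness}), while you phrase them via neighborhood bases, which is an equivalent bookkeeping choice.
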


This theorem directly follows from Corollary~\ref{thm:uniqueness}. Existence a topology that realizes weak convergence can be characterized in terms of properties of $\tau_{w}$ as follows:

\begin{thm}\label{thm:tau-w-exist}
The following statements are equivalent: 
\begin{enumerate}
\item $\tau_w$ realizes weak convergence of nets.
\item For all $x\in H$ and $y\in H\setminus \{x\}$ the point $x$ belongs to the $\tau_w$-interior of $U_x(y)$.
\end{enumerate}
In this case, $\tau_w$ satisfies the Hausdorff property. 
\end{thm}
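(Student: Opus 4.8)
The plan is to reduce everything to a single clean reformulation of weak convergence in terms of elementary sets, after which the two implications and the Hausdorff claim become essentially formal. The central step I would establish first is the identity
\[ U_x(w) = \{ z \in H : d(x, P_{[x,y]}z) < d(x,w)\} \]
valid for any $y \in H\setminus\{x\}$ and any $w \in [x,y] \setminus \{x\}$: whether a point lies in $U_x(w)$ is governed purely by how far along $[x,y]$ its projection sits. This follows from two standard $\CAT(0)$ facts, namely that $t \mapsto d(z, \gamma(t))$ is convex along a geodesic $\gamma$, and that consequently the projection of $z$ onto the sub-geodesic $[x,w]\subseteq[x,y]$ equals $w$ exactly when the projection onto the full geodesic $[x,y]$ lies at parameter at least $d(x,w)$. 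Granting this, I would deduce the key reformulation: a net $(x_\alpha)$ converges weakly to $x$ if and only if, for every $y \in H \setminus \{x\}$, the net is eventually in $U_x(y)$. Fixing $y$ and letting $w$ range over $[x,y]\setminus\{x\}$, the conditions ``eventually in $U_x(w)$'' together say precisely that $d(x, P_{[x,y]}x_\alpha) \to 0$, i.e. $P_{[x,y]}x_\alpha \to x$; and any $w \neq x$ arises this way with $y = w$, so the two families of conditions coincide. Since $(x_\alpha)$ is a net, being eventually in each of finitely many $U_x(y_i)$ is the same as being eventually in their intersection, so weak convergence to $x$ is equivalent to being eventually in every finite intersection of elementary sets around $x$.

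From this reformulation one implication is automatic: weak convergence always implies $\tau_w$-convergence, because every $\tau_w$-open neighborhood of $x$ contains, by definition of $\tau_w$, a finite intersection of elementary sets around $x$, in which the net is eventually. Hence statement (1) is equivalent to the single remaining implication ``$\tau_w$-convergence to $x$ implies weak convergence to $x$''. For (2) $\Rightarrow$ (1), I would observe that under (2) each set $\inter U_x(y)$ is a $\tau_w$-open neighborhood of $x$; a net converging to $x$ in $\tau_w$ is therefore eventually in $\inter U_x(y) \subseteq U_x(y)$ for every $y \neq x$, which by the reformulation is exactly weak convergence. For (1) $\Rightarrow$ (2), I would argue by contraposition: if $x \notin \inter U_x(y)$ for some $y$, then every $\tau_w$-open neighborhood of $x$ meets $H \setminus U_x(y)$, so choosing a point in each such intersection yields a net, indexed by the neighborhood filter of $x$ ordered by reverse inclusion, that converges to $x$ in $\tau_w$ yet lies forever outside $U_x(y)$ and hence does not converge weakly — contradicting realization.

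Assuming (2), I would finally prove the Hausdorff property by separating distinct points $x,x'$ along the geodesic $[x,x']$. Parametrizing $[x,x']$ from $x$ and applying the identity above, the elementary sets $U_x(w_1)$ and $U_{x'}(w_2)$ attached to points $w_1,w_2$ at parameters $s_1 < s_2$ are described by $\{z : d(x, P_{[x,x']}z) < s_1\}$ and $\{z : d(x, P_{[x,x']}z) > s_2\}$ respectively, and are therefore disjoint. Their $\tau_w$-interiors contain $x$ and $x'$ by (2) and are open, so they furnish the required disjoint $\tau_w$-open neighborhoods.

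I expect the main obstacle to be the first step: carefully justifying the projection identity and the resulting equivalence between weak convergence and eventual membership in elementary sets, as this is the only place where genuine $\CAT(0)$ geometry is used. Once that characterization is secured, the two implications and the Hausdorff separation follow by purely formal, net-theoretic arguments.
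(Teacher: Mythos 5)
Your proposal is correct and follows essentially the same route as the paper: your projection identity is just an ``if and only if'' packaging of the paper's Lemma~\ref{prop:elementary_incl} (proved by the same convexity-of-distance argument), your contrapositive net indexed by the neighborhood filter for $(1)\Rightarrow(2)$ is exactly the paper's construction, and your $(2)\Rightarrow(1)$ argument via the point $y_\varepsilon=w$ on $[x,y]$ coincides with the paper's. Your Hausdorff argument via disjoint elementary sets along $[x,x']$ is likewise one of the two routes the paper explicitly suggests (the last statement of Lemma~\ref{prop:elementary_incl} together with $(2)$).
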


The proof of Theorem~\ref{thm:tau-w-exist} can be found in Section~\ref{sec:weak-top-exist}. For the case of bounded CAT(0) spaces, the Hausdorff property of $\tau_{w}$ is even sufficient for the other conditions in Theorem~\ref{thm:tau-w-exist} to hold; see Corollary~\ref{cor:bounded_space_w}. 

Notice that the second condition in Theorem~\ref{thm:tau-w-exist} is weaker than to require that elementary sets be open with respect to $\tau_{w}$. In fact, it is in general unclear  whether elementary sets are $\tau_{w}$-open, as, e.g., pointed out by Ba\v cak~\cite{Bacak3}. A necessary and sufficient condition for this to be true is that $\tau_{w}=\tau_{g}$. Moreover, we show that $\tau_{w}=\tau_{g}$ is equivalent to the realization of weak convergence of nets by $\tau_{g}$; see Theorem~\ref{thm:proper}.
%weak convergence of a net $(x_{\alpha})$ to a point $x\in H$ implies that 
%$P_{\gamma} x_\alpha \rightarrow P_{\gamma} x$ with respect to $d$ for all $\gamma \in \Gamma(H)$
Example~\ref{ex:book-triang} showcases a scenario where $\tau_{w}\neq \tau_{g}$.

An interesting instance of the equality  $\tau_w=\tau_g$ is provided by locally compact CAT(0) spaces: We show that (similar to Banach spaces), the weak topologies  $\tau_w$ and $\tau_g$  agree with the metric topology $\tau_{d}$ in this case:

\begin{thm}\label{thm:loc}
 $(H,d)$ is locally compact if and only if $\tau_w$ coincides with the metric topology. In particular, in this case we have that $\tau_w=\tau_g=\tau_d$,  and weak and strong convergence of nets agree.
\end{thm}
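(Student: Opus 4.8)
The statement has two parts: the main equivalence ``$H$ locally compact $\iff \tau_w=\tau_d$'', and the ``in particular'' clause, and I would dispatch the latter first since it is a formal consequence. Once $\tau_w=\tau_d$ is known, the chain $\tau_{co}\subseteq\tau_w\subseteq\tau_g\subseteq\tau_d$ from Proposition~\ref{prop:relations} collapses by sandwiching to $\tau_w=\tau_g=\tau_d$. Moreover every elementary set $U_x(y)$ is metrically open, because its complement $\{z:P_{[x,y]}z=y\}=P_{[x,y]}^{-1}(\{y\})$ is closed (the projection onto the convex set $[x,y]$ is nonexpansive, hence continuous). Therefore, under $\tau_w=\tau_d$, the point $x$ lies in the $\tau_w$-interior of each $U_x(y)$, so condition (2) of Theorem~\ref{thm:tau-w-exist} holds and $\tau_w$ realizes weak convergence of nets; combining this with $\tau_w=\tau_d$ forces weak and strong net-convergence to coincide. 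So the work is entirely in the equivalence, which I would prove by establishing the two implications.

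For ``locally compact $\Rightarrow \tau_w=\tau_d$'', I would first invoke the Hopf--Rinow theorem for complete length spaces to upgrade local compactness to \emph{properness} (closed balls, and hence spheres, are compact). Since $\tau_w\subseteq\tau_d$ always, it suffices to show $\tau_d\subseteq\tau_w$, and because the metric balls form a base of $\tau_d$ while finite intersections $\bigcap_i U_x(y_i)$ form a neighbourhood base of $\tau_w$, this reduces to: for every $x$ and $r>0$ there are finitely many points $u_1,\dots,u_N\in H\setminus\{x\}$ with $\bigcap_i U_x(u_i)\subseteq B(x,r)$. Here I would use that the elementary sets are \emph{star-shaped about their centre} $x$, i.e. $z\in U_x(u)$ and $z'\in[x,z]$ imply $z'\in U_x(u)$ (equivalently, each fibre $F(u):=H\setminus U_x(u)=\{z:P_{[x,u]}z=u\}$ meets every geodesic issuing from $x$ in an outward subsegment); this follows from the CAT($0$) comparison inequality. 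By star-shapedness, $\bigcap_i U_x(u_i)\subseteq B(x,r)$ is equivalent to $\bigcup_i F(u_i)\supseteq S(x,r)$, so the whole problem reduces to covering the \emph{compact} sphere $S(x,r)$ by finitely many far-sets. Fixing a small $\delta\in(0,r)$, I would take a finite $\epsilon$-net $u_1,\dots,u_N$ of the compact sphere $S(x,r-\delta)$ and, for each $w\in S(x,r)$, let $m_w\in[x,w]$ be its foot at distance $r-\delta$ and choose $u_i$ with $d(m_w,u_i)\le\epsilon$; the goal is to certify $w\in F(u_i)$.

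The main obstacle is precisely this certification, namely establishing $P_{[x,u_i]}w=u_i$, i.e. that the \emph{actual} angle $\angle_{u_i}(x,w)\ge \pi/2$. This is the one direction that CAT($0$) comparison does \emph{not} control, since comparison only bounds actual angles from above; membership in the closed set $F(u_i)$ cannot be read off from comparison distances, and indeed the analogous statement fails in non-locally-compact trees and in the spike of Example~\ref{ex:spike}, where a nearby target point sends the projection to a branch point rather than to the endpoint. My plan to overcome this is to exploit properness twice: first through the elementary bound $d(w,p)\ge d(x,w)-d(x,p)=r-d(x,p)\ge \delta$ for all $p\in[x,u_i]$, which forces every point of $[x,u_i]$ at metric distance $\ge r-\delta-\epsilon$ from $x$ to be strictly farther from $w$ than $u_i$ is; and second through compactness of $S(x,r-\delta)$ (equivalently of the space of directions $\Sigma_x$), which lets the $\epsilon$-net simultaneously control the \emph{initial directions}, via $d(m_w,u_i)\ge 2(r-\delta)\sin(\angle_x(w,u_i)/2)$, so that the geodesics $[x,w]$ and $[x,u_i]$ stay uniformly close up to radius $r-\delta$ and no ``competing'' interior foot can appear on the short terminal arc near $u_i$. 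I expect the delicate point to be making this last exclusion quantitative and uniform over the compact sphere; this is exactly where local compactness is indispensable, and I would phrase it as a stability lemma: there exist $\delta,\epsilon>0$ such that $d(m_w,u)\le\epsilon$ and $d(x,u)=r-\delta$ imply $w\in F(u)$.

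For the converse ``not locally compact $\Rightarrow \tau_w\neq\tau_d$'', I would argue directly and take advantage of the easy direction of the comparison. If $H$ is not locally compact there is a point $x_0$ all of whose closed balls are non-compact; fixing one, completeness yields an $\epsilon$-separated, hence bounded, sequence $(v_m)$ with no strongly convergent subsequence. By weak sequential compactness of bounded sequences in Hadamard spaces (a standard fact), I pass to a subsequence converging weakly to some $x$ in the sense of Definition~\ref{def:weak-conv}; since $(v_m)$ has no strong limit, after a further subsequence I may assume $d(x,v_m)\ge c>0$ for all $m$. Now weak convergence means $P_{[x,y]}v_m\to x\neq y$ for every $y\neq x$, so for each such $y$ only finitely many $v_m$ lie in $F(y)=H\setminus U_x(y)$; consequently, for any finite collection $y_1,\dots,y_n$, cofinitely many $v_m$ lie in $\bigcap_i U_x(y_i)$ while still satisfying $d(x,v_m)\ge c$. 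Hence no finite intersection of elementary sets around $x$ is contained in $B(x,c/2)$, so this metric ball is not a $\tau_w$-neighbourhood of $x$, giving $\tau_w\neq\tau_d$ and completing the equivalence.
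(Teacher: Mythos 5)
Your ``in particular'' clause and your converse direction are fine: collapsing the chain of Proposition~\ref{prop:relations} and invoking Theorem~\ref{thm:tau-w-exist}(2) is correct, and your contrapositive argument (non-compact closed ball $\Rightarrow$ an $\epsilon$-separated bounded sequence with a weak subsequential limit $x$, which then invades every finite intersection of elementary sets around $x$ while staying at distance $\geq c$ from $x$) is a valid proof that $\tau_w\neq\tau_d$; it uses the same ingredient (Kirk's weak compactness of bounded sets) that the paper packages as Lemma~\ref{lem:w-compact}.

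The forward direction, however, has a genuine gap, and in fact two. First, the star-shapedness claim ($z\in U_x(u)$, $z'\in[x,z]$ $\Rightarrow$ $z'\in U_x(u)$) is asserted to ``follow from the CAT($0$) comparison inequality,'' but it does not follow from comparison in any routine way: it is exactly the special case of property $(N)$ of Remark~\ref{rem:geod-monotone} for geodesics emanating from an endpoint of the target segment, and property $(N)$ fails in general CAT($0$) spaces (the book of triangles of Example~\ref{ex:book-triang}); whether this special case survives is unclear and would itself need a proof, since membership in the fibre $F(u)$ amounts to the angle lower bound $\angle_u(x,\cdot)\geq\pi/2$, which comparison never supplies. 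Second, and more importantly, the certification step that you correctly identify as ``the main obstacle'' --- showing $P_{[x,u_i]}w=u_i$ for sphere points $w$ whose radial foot is $\epsilon$-close to the net point $u_i$ --- is left entirely as a plan (``I would phrase it as a stability lemma\dots I expect the delicate point to be\dots''). That stability lemma \emph{is} the theorem; without it nothing is proved. The paper's Lemma~\ref{lem:cone} shows how to get it cheaply: instead of certifying projection onto the segment ending at the net point itself, one certifies projection onto $[x,m]$ where $m$ is the \emph{midpoint} of $[x,y]$ ($y$ the net point in the small compact ball $K$). The built-in factor of two gives enough slack that a pure triangle-inequality computation, $d(y,z)\leq d(y,P_Kz)+d(P_Kz,z)\leq d(x,z)-\tfrac{\varepsilon}{2}<d(q,z)$ for all $q\in[x,m)$, followed by convexity of $d(\cdot,z)$ along $[x,y]$, yields $P_{[x,m]}z=m$ --- no angle estimates, no direction-space compactness, and it applies to \emph{every} $z$ outside $K$, not just to sphere points, so the star-shapedness reduction is never needed. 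Your outline, by contrast, certifies the projection all the way out to the net point (no shrinking factor), which is precisely the regime where the ``competing foot on the terminal arc'' cannot be excluded by the tools you allow yourself.
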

The proof can be found in Section~\ref{sec:loc-cpct}. 

Finally, for separable CAT($0$) spaces we show an analogue of the {Eberlein--\v Smulian Theorem}, namely that $\tau_{g}$-compactness is equivalent to $\tau_{g}$-sequential compactness. Moreover, for separable CAT($0$) spaces we show that $\tau_{g}$ is metrizable on any $\tau_{g}$-compact subset, see Theorem~\ref{thm:comp_implies_seq}.

\section{Relations \& first examples}\label{sec:first}
\subsection{Relations between topologies}
We start by showing inclusions of the topologies introduced above.

 \begin{proposition}\label{prop:relations}  
One has that $\tau_{co}\subseteq \tau_w\subseteq \tau_g\subseteq \tau_d$. %Here $\tau_d$ denotes the topology induced by the metric $d$ on $H$. 
\end{proposition}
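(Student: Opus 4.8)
The plan is to verify the three inclusions separately, in each case exploiting that $\tau_g$ and $\tau_{co}$ are defined as coarsest topologies (so it suffices to control subbasic sets), while $\tau_w$ is described directly through elementary sets. I will freely use that $\tau_w$ is a topology, hence stable under finite intersections and arbitrary unions.

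First I would treat $\tau_g \subseteq \tau_d$. Since $\tau_g$ is the coarsest topology containing all elementary sets $U_x(y)$, it is enough to check that each $U_x(y)$ is $\tau_d$-open. The nearest-point projection $P_{[x,y]} \colon H \to [x,y]$ onto the convex segment $[x,y]$ is nonexpansive, hence $\tau_d$-continuous. As $\{y\}$ is closed in $[x,y]$, its preimage $\{z : P_{[x,y]}z = y\}$ is $\tau_d$-closed, so $U_x(y)$ is its open complement, and minimality of $\tau_g$ gives $\tau_g \subseteq \tau_d$. Next, for $\tau_w \subseteq \tau_g$, let $U \in \tau_w$ and $x \in U$; by definition there are $y_1, \ldots, y_n \in H \setminus \{x\}$ with $\bigcap_{i} U_x(y_i) \subseteq U$. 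Each $U_x(y_i)$ is a subbasic member of $\tau_g$, so the finite intersection is $\tau_g$-open, and it contains $x$ because $P_{[x,y_i]} x = x \neq y_i$ yields $x \in U_x(y_i)$ for every $i$. Writing $U$ as the union over $x \in U$ of these $\tau_g$-open neighborhoods shows $U \in \tau_g$.

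The main work is $\tau_{co} \subseteq \tau_w$, and this is the step I expect to be the crux. Since $\tau_{co}$ is the coarsest topology in which every metrically closed convex set is closed, its open sets are generated by the complements $H \setminus C$ of such sets, so it suffices to show each $H \setminus C \in \tau_w$. Fix a closed convex $C$ and a point $x \in H \setminus C$, and set $p := P_C x$, which exists and is unique since $H$ is Hadamard; note $p \neq x$. I claim the single elementary set $U_x(p)$ already works, i.e. $U_x(p) \subseteq H \setminus C$. Indeed, for any $c \in C$ the obtuse-angle characterization of the projection gives $\angle_p(x,c) \geq \pi/2$, and by the first-variation formula the function $d(c,\cdot)$ is nondecreasing as one moves from $p$ toward $x$ along $[x,p]$; hence $p$ is the nearest point of $[x,p]$ to $c$, that is $P_{[x,p]}c = p$. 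Consequently $c \notin U_x(p)$ for every $c \in C$, so $C \cap U_x(p) = \emptyset$, while $x \in U_x(p)$ since $P_{[x,p]}x = x \neq p$. This exhibits the required elementary neighborhood of $x$ inside $H \setminus C$, proving $H \setminus C \in \tau_w$ and completing the chain.

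The only genuinely nontrivial ingredient is the claim $P_{[x,p]}c = p$ for all $c \in C$. The potential pitfall is to justify it rigorously from the two standard CAT($0$) facts (the $\geq \pi/2$ angle at a projection onto a convex set, and the first-variation formula) rather than from Euclidean intuition alone; both are available in, e.g., \cite{Brid}. Everything else reduces to the bookkeeping of subbases and the elementary observation $x \in U_x(y)$ for $y \neq x$.
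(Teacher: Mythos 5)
Your proof is correct, and for the two inclusions $\tau_w \subseteq \tau_g \subseteq \tau_d$ it coincides with the paper's argument (finite intersections of subbasic sets, and openness of elementary sets as preimages of open sets under continuous projections). For the crux inclusion $\tau_{co} \subseteq \tau_w$ you follow the same reduction as the paper --- it suffices to show $U_x(p) \subseteq H\setminus C$ where $p = P_C x$, equivalently $P_{[x,p]}c = p$ for every $c \in C$ --- but you justify this key claim by a different route. The paper first shows that every $z \in [x,p]$ satisfies $P_C z = p$ (a short distance computation) and then applies nonexpansiveness of $P_C$, giving $d(c,p) = d(P_C c, P_C z) \leq d(c,z)$ for all $z \in [x,p]$. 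You instead invoke the obtuse-angle characterization $\angle_p(x,c) \geq \pi/2$ of projections onto convex sets together with the first-variation formula. Both are standard CAT($0$) ingredients; your route avoids the intermediate lemma and nonexpansiveness, while the paper's avoids angle comparison entirely. One precision point in your version, which you yourself flag as the potential pitfall: the first-variation formula only yields the one-sided derivative of $t \mapsto d(c,\gamma(t))$ at $t = 0$, namely $-\cos\angle_p(x,c) \geq 0$; to conclude that this function is nondecreasing on all of $[p,x]$ you must additionally invoke convexity of the distance function along geodesics (or, alternatively, apply the CAT($0$) law of cosines to the triangle with vertices $p$, $q$, $c$ for each $q \in [p,x]$, noting that $\angle_p(q,c) = \angle_p(x,c)$). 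With that one sentence added, your argument is complete.
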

\begin{proof}
To prove the first inclusion we assume that $C\subseteq H$ is convex and metrically closed. We show that $H\setminus C$ is $\tau_w$-open. More precisely, we prove  that
\begin{align}\label{eq:convex_complement}
U_x(P_C x)\subseteq H\setminus C
\end{align}  
for all $x\in H\setminus C$, where $P_C x$ is the metric projection of $x$ onto $C$. To this end first notice that if $z\in [x,P_C x]$, then 
\[ d(z, P_C x)=d(x, P_C x)-d(x,z)\leq d(x, P_C z)-d(x,z) \leq d(z,P_C z),\] 
which implies that $P_C x=P_C z$. Now let $c\in C$. Then \[ d(c, P_C x)= d(P_C c, P_C z)\leq d(c,z)\] for all $z\in [x,P_C x]$, since $P_C$ is nonexpansiveness in CAT($0$) spaces. Hence $P_{[x,P_C x]}c= P_C x$. This proves $C\subseteq P_{[x,P_C x]}^{-1}(\{P_C x\})$, which yields \eqref{eq:convex_complement} by taking complements. This proves $\tau_{co}\subseteq \tau_w$. \\
We now turn to the middle inclusion.
By definition of $\tau_w$ every $\tau_w$-open set $U$ is a union $U=\cup_{x\in U}U_x$ where $U_x \subseteq U$ is an intersection of finitely many elementary sets around $x$. Hence $U$ is $\tau_g$-open as $\tau_g$ consists of unions of finite intersections of elementary sets. \\ 
For the last inclusion is suffices to note that elementary sets are metrically open as they are preimages of metrically open sets under continuous projections. 
\end{proof}
\begin{corollary}[closed convex sets]\label{cor:convex}
Let $C\subseteq H$ be convex. Then $C$ is $\tau_w$-closed if and only if $C$ is $\tau_g$-closed if and only if $C$ is metrically closed.
\end{corollary}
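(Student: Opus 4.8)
The plan is to derive the corollary directly from the inclusion chain $\tau_{co}\subseteq \tau_w\subseteq \tau_g\subseteq \tau_d$ established in Proposition~\ref{prop:relations}, together with the elementary principle that a finer topology possesses more closed sets: if $\sigma\subseteq\sigma'$, then every $\sigma$-closed set is $\sigma'$-closed, since its complement, being $\sigma$-open, is also $\sigma'$-open. The three stated closedness notions are thus linked by a cycle of implications, and proving the corollary amounts to checking that this cycle closes.

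First I would record the two ``easy'' implications, which run from the finer to the coarser topology and require no convexity. Since $\tau_w\subseteq\tau_g$, any $\tau_w$-closed set is $\tau_g$-closed; and since $\tau_g\subseteq\tau_d$, any $\tau_g$-closed set is $\tau_d$-closed, i.e.\ metrically closed.

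The remaining implication, metrically closed (and convex) $\Rightarrow$ $\tau_w$-closed, is where convexity enters and is the only step with genuine content. Here I would invoke the defining property of the coconvex topology: since $C$ is convex and metrically closed, it is $\tau_{co}$-closed by definition of $\tau_{co}$; as $\tau_{co}\subseteq\tau_w$, the complement $H\setminus C$ is then $\tau_w$-open, so $C$ is $\tau_w$-closed. Equivalently, and more explicitly, one may appeal to the inclusion \eqref{eq:convex_complement} established inside the proof of Proposition~\ref{prop:relations}, which for each $x\in H\setminus C$ exhibits an elementary set $U_x(P_C x)$ around $x$ contained in $H\setminus C$, directly witnessing that $H\setminus C$ is $\tau_w$-open.

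Combining the three implications yields the chain
\[ \tau_w\text{-closed} \;\Rightarrow\; \tau_g\text{-closed} \;\Rightarrow\; \text{metrically closed} \;\Rightarrow\; \tau_w\text{-closed}, \]
so all three notions coincide. I do not anticipate any real obstacle: the corollary is a formal consequence of Proposition~\ref{prop:relations}, the substantive work, namely the passage from metric closedness to $\tau_w$-closedness for convex sets, having already been carried out in the first inclusion of that proposition.
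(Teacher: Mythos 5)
Your proposal is correct and follows essentially the same route as the paper: the paper's proof also derives the corollary from Proposition~\ref{prop:relations}, observing that any topology $\tau$ with $\tau_{co}\subseteq\tau\subseteq\tau_d$ satisfies the equivalence, which is precisely the cycle of implications you spell out (metric closedness of a convex set giving $\tau_{co}$-closedness by definition of the coconvex topology, and the inclusions handling the rest). Your version merely makes explicit what the paper calls ``straightforward to verify.''
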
 
\begin{proof}
This is a consequence of Proposition \ref{prop:relations}, since it is straightforward to verify that every topology $\tau$ on $H$ with $\tau_{co}\subseteq \tau\subseteq \tau_d$ satisfies this equivalence.
\end{proof}

\begin{proposition}\label{prop:convex_preimage}
We have $\tau_{co}=\tau_w=\tau_g$ if and only if $U_x(y)$ is $\tau_{co}$-open for all $x,y\in H$ with $x\neq y.$ In particular, if the preimages $P_{[x,y]}^{-1}(\{y\})$ are convex for all $x,y\in H$ with $x\neq y$, then $\tau_{co}=\tau_w=\tau_g.$
\end{proposition}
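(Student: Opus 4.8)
The plan is to reduce the whole statement to the minimality of $\tau_g$ together with Proposition~\ref{prop:relations}, which already gives the chain $\tau_{co}\subseteq \tau_w\subseteq \tau_g$. Consequently, the three topologies coincide as soon as one establishes the single remaining inclusion $\tau_g\subseteq \tau_{co}$. The forward implication is then immediate: since $\tau_g$ is by definition the coarsest topology containing every elementary set, each $U_x(y)$ is $\tau_g$-open; if we assume $\tau_{co}=\tau_g$, then each $U_x(y)$ is $\tau_{co}$-open, which is exactly the asserted condition.

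For the reverse implication I would argue as follows. Assume that every $U_x(y)$ with $x\neq y$ is $\tau_{co}$-open. Then $\tau_{co}$ is a topology that contains all the sets $U_x(y)$. Because $\tau_g$ is, by definition, the \emph{coarsest} topology containing these sets, minimality forces $\tau_g\subseteq \tau_{co}$. Combined with Proposition~\ref{prop:relations} this yields $\tau_{co}\subseteq \tau_w\subseteq \tau_g\subseteq \tau_{co}$, hence $\tau_{co}=\tau_w=\tau_g$, as desired.

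It remains to prove the \textbf{``In particular''} claim. Here I would rewrite the elementary set as the complement of a preimage, namely $U_x(y)=H\setminus P_{[x,y]}^{-1}(\{y\})$, which holds directly from the definition $U_x(y)=\{z\in H:P_{[x,y]}z\neq y\}$. The set $P_{[x,y]}^{-1}(\{y\})$ is metrically closed, since $P_{[x,y]}$ is nonexpansive (as already used in Proposition~\ref{prop:relations}) and hence continuous, and $\{y\}$ is closed. Under the hypothesis that this preimage is convex, it is therefore a metrically closed convex set, so it is $\tau_{co}$-closed by the very definition of the coconvex topology; taking complements shows $U_x(y)$ is $\tau_{co}$-open for all $x\neq y$, and the first part of the proposition then gives $\tau_{co}=\tau_w=\tau_g$.

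The argument is largely formal, so there is no serious obstacle; the only point genuinely requiring care is the metric closedness of $P_{[x,y]}^{-1}(\{y\})$, which I would justify by invoking the continuity (nonexpansiveness) of the closest-point projection onto a compact geodesic. Everything else follows from the definitions of $\tau_{co}$ and $\tau_g$ and from the inclusions established in Proposition~\ref{prop:relations}.
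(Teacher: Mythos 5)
Your proposal is correct and follows essentially the same route as the paper's proof: the equivalence is reduced to the minimality of $\tau_g$ together with the inclusion chain from Proposition~\ref{prop:relations}, and the ``in particular'' part is obtained by writing $U_x(y)=H\setminus P_{[x,y]}^{-1}(\{y\})$, noting this preimage is metrically closed, and invoking the definition of $\tau_{co}$. Your write-up merely makes explicit the details (minimality argument, closedness via continuity of the projection) that the paper leaves implicit.
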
 
\begin{proof}
The first statements follows from the fact that  $\tau_{co}\subseteq \tau_g$ and the definition of $\tau_g$. For the second statement note that $P_{[x,y]}^{-1}(\{y\})=H\setminus U_x(y)$ is metrically closed. Hence convexity of $P_{[x,y]}^{-1}(\{y\})$ implies that $U_x(y)\in \tau_{co}.$
\end{proof}

\begin{remark}\label{rem:geod-monotone}
"The property of the nice projection onto geodesics"
(property $(N)$ for short) that was introduced in \cite{Espi} says: For all $\gamma\in \Gamma(H)$ and $x,y,m\in H$ we have $P_\gamma m \in [P_\gamma x,P_\gamma y] $ if  $m\in [x,y]$. Moreover, we consider the $(Q_4)$  condition 
\[ d(x,p)<d(x,q) \text{ and } d(y,p)<d(y,q) \quad \text{implies}\quad d(m,p)\leq  d(m,q) \ \forall m\in [x,y].\] 
This condition was studied in \cite{Kirk}. We also consider the slightly stronger $(\overline{Q}_4)$  condition 
\[ d(x,p)\leq d(x,q) \text{ and } d(y,p)\leq d(y,q) \quad \text{implies}\quad d(m,p)\leq  d(m,q) \ \forall m\in [x,y],\] 	
which was used in \cite{Kakavandi} as a sufficient condition for the realization of weak convergence.  
In~\cite[Theorem 5.7]{Espi} it is shown that any $\mathrm{CAT(0)}$ space of constant curvature satisfies the $(Q_4)$ condition.\\
Clearly, the $(\overline{Q}_4)$ condition implies the $(Q_4)$ condition. Furthermore, the $(Q_4)$ condition implies property $(N)$, see~\cite[Lemma 5.15]{Espi}. 
If property $(N)$ is satisfied, then $P_{[x,y]}^{-1}(\{y\})$ is convex for all $x,y\in H$ with $x\neq y$. Hence, due to Proposition \ref{prop:convex_preimage}, each of the conditions $(Q_4),$
$(\overline{Q}_4)$ and $(N)$ imply $\tau_{co}=\tau_w=\tau_g$, and we show that these topologies realize weak convergence of nets in this case (see Theorem~\ref{thm:proper}). 
\end{remark}

\subsection{Examples}
We provide three simple examples of unbounded CAT($0$) spaces on which the topologies $\tau_g$ and $\tau_w$ agree with the coconvex topology $\tau_{co}$. %but where the topology $\tau_{LP}$ introduced by Lytchak and Petrunin is strictly finer.

\begin{example}[Hilbert spaces]\label{ex:Hilbert}
If $(H,d)$ is a Hilbert space, then $\tau_{co}=\tau_w=\tau_g$ agree with the usual weak topology on $H$: The usual weak topology is finer than $\tau_g$ as for $x,y\in H$ with $x\neq y$   we have  
\[ U_x(y)= \left\{ z\in H \colon \, \mathrm{Re}\left( \langle b, z-x \rangle\right)<1 \right\}  \] 
with $b=\frac{y-x}{\|y-x\|^2}$. Therefore, $U_x(y)$ is open with respect to the usual weak topology. Moreover, the usual weak topology is generated by convex half spaces and hence coarser than $\tau_{co}$ (see also \cite[Ex. 18]{Monod}). Therefore, the claim follows from Proposition \ref{prop:relations}. \\ 
The following argument shows that the topology $\tau_{LP}$ is strictly finer than the usual weak topology on $H$ if $H$ is infinite dimensional: If $A$ is closed in the usual weak topology, then it is weakly sequentially closed. Therefore, it is $\tau_{LP}$-closed. This shows that $\tau_{LP}$ is finer than the usual weak topology. To see that it is strictly finer consider an orthonormal sequence $(e_n)\subseteq H$. The set $A:=\{ \sqrt{n} e_n \colon n\in \mathbb{N}\}$ is weakly sequentially closed, but 0 is contained in the weak closure of $A$. Hence $A$ is closed in $\tau_{LP}$, but not closed in the usual weak topology.
\end{example}

\begin{example}[infinite spike revisited]  
Consider the infinite spike from Example~\ref{ex:spike}. Elementary sets take one of two shapes: (i) half open intervals $(a, n]\subseteq I_{n}$, $a\geq 0$, and (ii) complements of closed intervals $[a, n]\subseteq I_{n}$, $a>0$. Therefore, elementary sets are $\tau_{co}$-open, and Proposition~\ref{prop:relations}  implies that $\tau_{co}=\tau_{w}=\tau_{g}$. 
Furthermore, the sequence $(x_{n})$ of nonzero endpoints converges to $0$ with respect to the latter topologies, but not with respect to $\tau_{LP}$. The fact that $\tau_{co}\subseteq \tau_{LP}$ (see \cite[Prop. 3.1]{lytchak2021weak}) then implies that $\tau_{LP}$ is strictly finer than $\tau_{co}=\tau_w=\tau_g$ on this space. 
Additionally, the infinite spike is compact and sequentially compact with respect to the topology $\tau_{co}=\tau_{w}=\tau_{g}$. 
\end{example}

\begin{example}[infinite dimensional hyperbolic space]
Consider the upper half space model of infinite dimensional (separable) real hyperbolic space $\mathbb{H^{\infty}}$. In this model, elementary sets take the form of one of the two connected components that arise when removing a half sphere that touches the boundary of $\mathbb{H^{\infty}}$ orthogonally. Therefore, elementary sets are $\tau_{co}$-open, and Proposition~\ref{prop:relations}  implies that $\tau_{co}=\tau_{w}=\tau_{g}$.
\end{example}

\section{Results on \texorpdfstring{$\tau_w$}{tauw}}\label{sec:tau-w}
This section is devoted to the topology $\tau_w$. As a main result we provide the following uniqueness statement: Whenever there exists a topology on $H$ realizing weak convergence of nets, then this topology agrees with $\tau_w$. Moreover, we give a characterization for when $\tau_w$ realizes weak convergence. Before stating these results we require some preliminaries. 
 
\subsection{Realization of weak convergence}\label{sec:weak-top-exist}
First observe that weak convergence always implies convergence with respect to $\tau_w$:
\begin{proposition}\label{prop:w-convergence}
Let $(x_\alpha)\subseteq H$ be a net and $x\in H$. Then 
 $x_\alpha \to x$ weakly implies $x_\alpha \to x$ with respect to $\tau_w$.
\end{proposition}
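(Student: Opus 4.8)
The plan is to verify the defining property of $\tau_w$-convergence directly. Given an arbitrary $\tau_w$-open set $U$ containing $x$, I would show that the net $(x_\alpha)$ is eventually contained in $U$. By the definition of $\tau_w$, the fact that $U$ is open and $x\in U$ supplies finitely many points $y_1,\dots,y_n\in H\setminus\{x\}$ with $U_x(y_1)\cap\dots\cap U_x(y_n)\subseteq U$. Thus it suffices to show that $(x_\alpha)$ is eventually in each elementary set $U_x(y_i)$ and then to combine these finitely many tails.

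The core step is the single-neighborhood statement: for a fixed $y\in H\setminus\{x\}$, weak convergence forces $(x_\alpha)$ into $U_x(y)=\{z:P_{[x,y]}z\neq y\}$ eventually. Here I would invoke the definition of weak convergence, which gives $P_{[x,y]}x_\alpha\to x$ with respect to $d$. Since $x\neq y$ we have $d(x,y)>0$, so there is an index $\alpha_y$ beyond which $d(P_{[x,y]}x_\alpha,x)<d(x,y)$. For such $\alpha$ the projection $P_{[x,y]}x_\alpha$ cannot equal $y$, because $d(y,x)=d(x,y)$, and hence $x_\alpha\in U_x(y)$. This is consistent with $x$ itself lying in every $U_x(y)$, since $P_{[x,y]}x=x\neq y$.

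Finally I would combine the finitely many tails using directedness of the index set of the net: choosing an index $\alpha_0$ that dominates $\alpha_{y_1},\dots,\alpha_{y_n}$, every $x_\alpha$ with $\alpha\geq\alpha_0$ lies in $U_x(y_1)\cap\dots\cap U_x(y_n)\subseteq U$. This yields $x_\alpha\to x$ with respect to $\tau_w$.

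I do not expect a genuine obstacle here; the argument is elementary once the definition of $\tau_w$ is unwound. The only points requiring a little care are the passage from convergence of the projections to eventual membership in $U_x(y)$, which rests solely on $d(x,y)>0$, and the net-theoretic step of merging finitely many tails, which is precisely where directedness of the index set (rather than a purely sequential argument) is used.
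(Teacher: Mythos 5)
Your proposal is correct and follows essentially the same route as the paper's proof: unwind the definition of $\tau_w$ to reduce to finitely many elementary sets $U_x(y_i)$, use weak convergence to get eventual membership in each, and combine the tails via directedness. The only difference is that you spell out the small step (that $d(P_{[x,y_i]}x_\alpha,x)<d(x,y_i)$ forces $P_{[x,y_i]}x_\alpha\neq y_i$) which the paper leaves implicit.
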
 
\begin{proof}
Let $U$ be $\tau_w$-open with $x\in U$. Then there exists finitely many $y_1, \ldots y_n \in H\setminus \{x\}$ such that 
\[ U_x(y_1)\cap \ldots \cap U_x(y_n) \subseteq U. \] 
Weak convergence of $x_{\alpha}\to x$ implies that $P_{[x,y_i]} x_\alpha \to x$ with respect to $d$. Hence there exists $\alpha_i$ such that $x_\alpha \in U_x(y_i)$ for all $\alpha \succcurlyeq \alpha_i.$ Let $\alpha_0\succcurlyeq \alpha_i$ for $i=1,\ldots,n$, then $x_\alpha\in U$ for all $\alpha \succcurlyeq \alpha_0.$
\end{proof}
The following lemma concerns properties of elementary sets and will be used frequently in the sequel. 
\begin{lemma} \label{prop:elementary_incl}
Let $x,y\in H$ with $x\neq y$, and let $a\in [x,y]$ different from $x$ and $y$. If $z\in U_x(a)$, then $P_{[x,y]}z\in [x,a)$. In particular,  
 $U_x(a) \subseteq U_x(y)$. Moreover, $U_x(a)\cap U_y(a)= \emptyset$.
\end{lemma}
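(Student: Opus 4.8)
The plan is to reduce all three assertions to the convexity of a single one-variable function. Parametrize $[x,y]$ by arc length via $\gamma \colon [0,L] \to H$ with $\gamma(0)=x$, $\gamma(L)=y$ and $L=d(x,y)$, and set $s=d(x,a)\in(0,L)$ so that $\gamma(s)=a$; the hypotheses $x\neq y$ and $a\neq x,y$ guarantee $0<s<L$. Let $f(t):=d(z,\gamma(t))$. The crucial input is the standard fact that in a $\CAT(0)$ space the distance to a fixed point is convex along geodesics, so $f$ is convex on the whole interval $[0,L]$. Under this parametrization $P_{[x,a]}z=\gamma(t_0)$, where $t_0$ is the unique minimizer of $f$ on $[0,s]$, and $P_{[x,y]}z=\gamma(t_*)$, where $t_*$ is the unique minimizer of $f$ on $[0,L]$; uniqueness in both cases is exactly single-valuedness of the projection onto a compact (hence complete) convex subset of a $\CAT(0)$ space.

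For the first assertion I would argue as follows. The hypothesis $z\in U_x(a)$ means $P_{[x,a]}z\neq a$, i.e.\ $t_0\neq s$; since $t_0\in[0,s]$ this forces $t_0<s$, and uniqueness of the minimizer yields the strict inequality $f(t_0)<f(s)$. I would then show that $t_0$ is in fact the minimizer of $f$ on all of $[0,L]$. For $t\in[0,s]$ this is immediate, while for $t\in(s,L]$ I write $s$ as a convex combination $s=\lambda t_0+(1-\lambda)t$ with $\lambda=\tfrac{t-s}{t-t_0}\in(0,1)$ and combine convexity, $f(s)\le \lambda f(t_0)+(1-\lambda)f(t)$, with $f(s)>f(t_0)$ to deduce $f(t)>f(t_0)$. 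Hence $t_*=t_0<s$, which is precisely $P_{[x,y]}z=\gamma(t_0)\in[x,a)$.

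The remaining two claims then follow quickly. Since $[x,a)\subseteq[x,y]$ does not contain $y$, the membership $P_{[x,y]}z\in[x,a)$ gives $P_{[x,y]}z\neq y$, i.e.\ $z\in U_x(y)$, which proves $U_x(a)\subseteq U_x(y)$. For disjointness I apply the first assertion once more with the roles of $x$ and $y$ interchanged, using $[y,x]=[x,y]$ and $P_{[y,x]}z=P_{[x,y]}z$: any $z\in U_y(a)$ then satisfies $P_{[x,y]}z\in[y,a)$, which in the parametrization corresponds to $t\in(s,L]$, whereas any $z\in U_x(a)$ satisfies $P_{[x,y]}z\in[x,a)$, corresponding to $t\in[0,s)$. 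As these parameter ranges are disjoint, no point can lie in both elementary sets, so $U_x(a)\cap U_y(a)=\emptyset$.

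I expect the conceptual heart of the argument to be the observation that $f$ is convex on the \emph{entire} interval $[0,L]$, not merely on each subgeodesic: this is what lets the local information extracted on $[0,s]$ propagate to the global minimizer on $[0,L]$. The only genuine subtlety is the bookkeeping around strictness and uniqueness, namely ensuring $t_0<s$ and $f(t_0)<f(s)$ are both strict; both rest solely on single-valuedness of the projection in $\CAT(0)$, after which everything reduces to the elementary convexity estimate above.
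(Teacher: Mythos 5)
Your proof is correct and follows essentially the same route as the paper's: both rest on the convexity of the distance function $t \mapsto d(z,\gamma(t))$ along the whole geodesic $[x,y]$, with strictness supplied by uniqueness of the metric projection, so that the minimizer over $[x,a]$ remains the minimizer over all of $[x,y]$. The only cosmetic difference is the disjointness claim, which the paper obtains by a direct convexity contradiction at $a$ (two points on opposite sides of $a$ with strictly smaller distance), while you reapply the first assertion with $x$ and $y$ interchanged --- both amount to the same convexity fact.
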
 
\begin{proof}
Let $z\in U_x(a)$. Then the convex function $\phi: [x,y] \to \mathbb{R}$, $u\mapsto d(u,z)$ (see \cite[Ex. 2.2.4]{Bacak}), satisfies $\phi(P_{[x,a]}z)={d(P_{[x,a]}z,z)< d(a,z)} =\phi(a)$. Hence $\phi(a)<  \phi(u)$ for all $u\in (a,y]$. Therefore, the function $\phi$ attains its minimum in $[x,a).$ Hence, $P_{[x,y]}z\in [x,a)$, which shows the first statement. Since $[x,a)\subseteq [x,y)$, we obtain that $z\in U_x(y).$ Finally, $z\in U_x(a)\cap U_y(a)$ would imply that $\phi(P_{[x,a]})< \phi(a)$ and $\phi(P_{[a,y]}z)< \phi(a)$, contradicting the convexity of $\phi$. 
\end{proof}

The following result characterizes $\tau_w$-closed subsets as exactly the ones that are net-closed with respect to weak convergence.  
\begin{proposition}[$\tau_w$-closed sets]\label{lem:w-closed-sets}
Let $A\subseteq H$ be a subset. Then the following statements are equivalent: 
\begin{enumerate}
\item $A$ is $\tau_w$-closed. 
\item Whenever a net in $A$ converges weakly to some $x\in H$, then $x\in A$.
\end{enumerate}
\end{proposition}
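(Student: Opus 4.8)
The plan is to prove the two implications separately: the forward direction is essentially immediate once we invoke Proposition~\ref{prop:w-convergence}, while the reverse direction requires constructing a weakly convergent net inside $A$.

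For $(1)\Rightarrow(2)$ I would argue directly. Suppose $A$ is $\tau_w$-closed and let $(x_\alpha)\subseteq A$ be a net converging weakly to some $x\in H$. By Proposition~\ref{prop:w-convergence} the net $(x_\alpha)$ then converges to $x$ with respect to $\tau_w$. Since in any topological space a closed set contains the limit of every convergent net taking values in it, we conclude $x\in A$. This direction uses nothing beyond Proposition~\ref{prop:w-convergence} together with the standard net-characterization of closedness.

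The substance lies in $(2)\Rightarrow(1)$, which I would prove by contraposition. Assume $A$ is not $\tau_w$-closed, so that $H\setminus A$ is not $\tau_w$-open. Unwinding the definition of $\tau_w$, this produces a point $x\in H\setminus A$ such that for \emph{every} finite set $F\subseteq H\setminus\{x\}$ the intersection $\bigcap_{a\in F}U_x(a)$ is not contained in $H\setminus A$, i.e.\ it meets $A$. I would then take as index set the collection $\mathcal{D}$ of all finite subsets of $H\setminus\{x\}$, directed by inclusion (the union of two finite sets being finite and containing both), and for each $F\in\mathcal{D}$ select a point $x_F\in A\cap\bigcap_{a\in F}U_x(a)$. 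This yields a net $(x_F)_{F\in\mathcal{D}}$ lying entirely in $A$, and the goal is to show that it converges weakly to $x$; by hypothesis~(2) this would force $x\in A$, contradicting $x\in H\setminus A$.

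The crucial step is verifying weak convergence, and here Lemma~\ref{prop:elementary_incl} does the work. Fix $y\in H\setminus\{x\}$ and $\varepsilon>0$, and choose $a\in[x,y]$ with $0<d(x,a)<\min(\varepsilon,d(x,y))$, so that $a$ is strictly between $x$ and $y$. Since $\{a\}\in\mathcal{D}$, every $F\succcurlyeq\{a\}$ satisfies $x_F\in U_x(a)$, whence Lemma~\ref{prop:elementary_incl} gives $P_{[x,y]}x_F\in[x,a)$ and therefore $d(P_{[x,y]}x_F,x)<\varepsilon$. As $y$ and $\varepsilon$ were arbitrary, $P_{[x,y]}x_F\to x$ for all $y\in H\setminus\{x\}$, which is precisely weak convergence of $(x_F)$ to $x$. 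I expect the main obstacle to be exactly this passage from the qualitative membership $x_F\in U_x(a)$ (which a priori only records $P_{[x,y]}x_F\neq a$) to the quantitative estimate $d(P_{[x,y]}x_F,x)<\varepsilon$; Lemma~\ref{prop:elementary_incl} supplies it by letting $a$ run through points of $[x,y]$ approaching $x$, and indexing the net by finite subsets of $H\setminus\{x\}$ is what allows all of these witnesses $a$ to be fed into a single net at once.
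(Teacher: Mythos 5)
Your proposal is correct and follows essentially the same route as the paper: the forward direction via Proposition~\ref{prop:w-convergence} plus the net-characterization of closed sets, and the reverse direction by contraposition, indexing a net in $A$ by the finite subsets of $H\setminus\{x\}$ ordered by inclusion and using Lemma~\ref{prop:elementary_incl} to upgrade membership in $U_x(a)$ for $a\in[x,y]$ near $x$ to the estimate $d(P_{[x,y]}x_F,x)<\varepsilon$. The only cosmetic difference is that the paper fixes the witness point $y_\varepsilon\in[x,y]$ at distance exactly $\varepsilon$ from $x$, while you take any $a$ with $0<d(x,a)<\min(\varepsilon,d(x,y))$; these are interchangeable.
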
 
\begin{proof}
\item[$(1)\Rightarrow(2)$:] Suppose that $A$ is $\tau_w$-closed, and let $(x_\alpha)$ be a net in $A$ and $x\in H$ with $x_\alpha \to x$ weakly. Proposition \ref{prop:w-convergence} yields $x_\alpha \to x$ with respect to $\tau_w$. Therefore, $x\in A$. 
\item[$(2)\Rightarrow(1)$:] To prove the contrapositive we assume that $A$ is not $\tau_w$-closed. Then $H\setminus A$ is not $\tau_w$-open. Hence there exists $x\in H\setminus A$ such that for every finite subset $F\subseteq H\setminus \{x\}$ there exists 
\[ x_F \in \bigcap_{z\in F} U_x(z) \quad\text{with} \quad x_F \in A. \] 
The family  $\mathcal{F}$ of finite subsets of $H\setminus \{x\}$ is a directed set by set inclusion. Together with the above construction we obtain a net $(x_F)_{F \in \mathcal{F}} \subseteq A$. We claim that this net converges weakly to $x$. Let $y\in H$ with $x\neq y$ and $0<\varepsilon < d(x,y)$. Let $y_\epsilon\in [x,y]$ such that $d(x,y_\epsilon)=\varepsilon$, and consider the one point set $F_\varepsilon=\{y_\epsilon\}$. If $F\in \mathcal{F}$ with $F\supseteq F_\varepsilon$, then $x_F\in U_x(y_\epsilon)$. Lemma \ref{prop:elementary_incl} yields $P_{[x,y]} x_F \in [x,y_\varepsilon).$ Therefore, $d(x, P_{[x,y]}x_F)<\varepsilon$, proving weak convergence of $(x_F)_{F\in \mathcal{F}}$ to $x$. %Hence $(x_F)_{F\in \mathcal{F}}$ convergens to $x$ with respect to $\tau$. This shows that $H\setminus U$ is not $\tau$-closed. Hence $U$ does not belong to $\tau.$
\end{proof}

With Propositions~\ref{prop:w-convergence} and \ref{lem:w-closed-sets} at hand, we deduce a statement that relates weak convergence of nets to the topology $\tau_w$. In particular, Theorem~\ref{thm:tau-w-unique} is a direct consequence. 
\begin{corollary}\label{thm:uniqueness}
\begin{enumerate}[1.]
\item Suppose that $\tau$ is a topology on $H$ such that $\tau$-convergence of nets implies weak convergence of nets. Then $\tau_w\subseteq \tau$.
\item Suppose $\tau$ is a topology on $H$ such that weak convergence of nets implies $\tau$-convergence of nets. Then $\tau\subseteq \tau_w$.
\end{enumerate}
\end{corollary}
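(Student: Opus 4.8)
The plan is to prove both statements by a symmetric argument that compares the two topologies at the level of \emph{closed} sets, using the net-characterization of $\tau_w$-closedness from Proposition~\ref{lem:w-closed-sets} together with one standard fact from general topology: in an arbitrary topological space $(X,\tau)$, a subset $A$ is $\tau$-closed if and only if every net contained in $A$ that $\tau$-converges to some point $x$ satisfies $x\in A$ (equivalently, the $\tau$-closure of $A$ is exactly the set of $\tau$-limits of nets in $A$). Since the inclusion $\tau_1\subseteq\tau_2$ of topologies is equivalent to the statement that every $\tau_1$-closed set is $\tau_2$-closed, it suffices in each case to transfer closedness from one topology to the other.

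For the second statement, suppose weak convergence of nets implies $\tau$-convergence of nets, and let $A$ be $\tau$-closed; I would show $A$ is $\tau_w$-closed via Proposition~\ref{lem:w-closed-sets}. To that end, take any net $(x_\alpha)\subseteq A$ converging weakly to some $x\in H$. By hypothesis $(x_\alpha)$ then $\tau$-converges to $x$, and since $A$ is $\tau$-closed the standard fact above forces $x\in A$. Thus every weakly convergent net in $A$ has its limit in $A$, so $A$ is $\tau_w$-closed and hence $\tau\subseteq\tau_w$.

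The first statement is proved by reversing the roles of the two topologies. Assuming $\tau$-convergence of nets implies weak convergence of nets, let $A$ be $\tau_w$-closed; I would show $A$ is $\tau$-closed. Take any net $(x_\alpha)\subseteq A$ that $\tau$-converges to some $x\in H$. By hypothesis $(x_\alpha)$ converges weakly to $x$, and since $A$ is $\tau_w$-closed Proposition~\ref{lem:w-closed-sets} yields $x\in A$. Hence $A$ contains the $\tau$-limit of every $\tau$-convergent net in $A$, so $A$ is $\tau$-closed and $\tau_w\subseteq\tau$. Combining the two statements immediately gives Theorem~\ref{thm:tau-w-unique}: a topology realizing weak convergence of nets satisfies both hypotheses, whence $\tau_w\subseteq\tau$ and $\tau\subseteq\tau_w$.

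I do not expect a genuine obstacle here; the argument is essentially bookkeeping once Proposition~\ref{lem:w-closed-sets} is in hand. The one point that must be handled with care is that the whole argument has to be run with \emph{nets} rather than sequences: the equivalence ``closed $\Leftrightarrow$ contains all net-limits'' is valid in an arbitrary topological space, whereas its sequential analogue only characterizes \emph{sequentially} closed sets. Since neither $\tau$ nor $\tau_w$ is assumed to be sequential, replacing nets by sequences would not suffice, and this is precisely why Proposition~\ref{lem:w-closed-sets} was phrased for nets in the first place.
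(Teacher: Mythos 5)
Your proposal is correct, and your proof of part 2 is word-for-word the paper's own argument: take a $\tau$-closed set $A$, a net in $A$ converging weakly to $x$, upgrade to $\tau$-convergence by hypothesis, conclude $x\in A$, and invoke Proposition~\ref{lem:w-closed-sets}. For part 1 you deviate slightly: the paper disposes of it in one line by noting that, by Proposition~\ref{prop:w-convergence}, the hypothesis makes the identity map $(H,\tau)\rightarrow(H,\tau_w)$ net-convergence-preserving and hence continuous, which is exactly $\tau_w\subseteq\tau$. You instead run the same closed-set transfer as in part 2, in the opposite direction: a $\tau_w$-closed set $A$ absorbs limits of weakly convergent nets by Proposition~\ref{lem:w-closed-sets}, hence absorbs limits of $\tau$-convergent nets by hypothesis, hence is $\tau$-closed. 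Both are valid; the paper's version is shorter and needs only Proposition~\ref{prop:w-convergence}, while yours is pleasantly symmetric, handling both inclusions with the single characterization of $\tau_w$-closed sets (note, though, that the direction of Proposition~\ref{lem:w-closed-sets} you use in part 1 is itself proved from Proposition~\ref{prop:w-convergence}, so the two routes rest on the same foundation). Your closing remark about why nets, not sequences, are indispensable here is exactly right and is the one place a careless argument would break.
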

\begin{proof}
\begin{enumerate}[1.]
\item Due to Proposition \ref{prop:w-convergence} the identity map $(H,\tau)\rightarrow (H,\tau_w)$ is continuous. 
\item Suppose $A\subseteq H$ is $\tau$-closed. Let $(x_\alpha)$ be a net in $A$ and $x\in H$ with $x_\alpha \to x$ weakly. Then $x_\alpha \to x$ with respect to $\tau$. Hence $x\in A$. Therefore the statement follows from Proposition~\ref{lem:w-closed-sets}. \qedhere
\end{enumerate}
\end{proof}

We are now in the position to complete the proof of Theorem~\ref{thm:tau-w-exist}.

\begin{proof}[Proof of Theorem~\ref{thm:tau-w-exist}]%\label{thm:characterization_of_w_realization} 
\item[$(1)\Rightarrow(2)$:] We prove the contrapositive implication. Suppose that there exist $x\in H$ and $y\in H\setminus \{x\}$ such that $x$ does not belong to the $\tau_w$-interior of $U_x(y)$. Then for every $\tau_w$-open set $U$ with $x\in U$ there exists $x_U\in U$ with $x_U\notin U_x(y)$. The family $\mathcal{U}:=\{ U\in \tau_w\colon x\in U \}$ is directed with respect to reverse set inclusion, and the net $(x_U)_{U \in \mathcal{U}}$ converges to $x$ with respect to $\tau_w$. On the other hand $P_{[x,y]} x_U=y$ for all $U\in \mathcal{U}$.  Therefore, $(x_U)_{U\in \mathcal{U}}$ does not weakly converge to $x$. Hence $\tau_w$ does not realizes weak convergence.
\item[$(2)\Rightarrow(1)$:] 
Let $(x_\alpha)$ be a net in $H$ and $x\in H$. Due to Proposition \ref{prop:w-convergence} it suffices to show that  $x_\alpha \rightarrow x$ with respect to $\tau_w$ implies $x_\alpha \rightarrow x$ weakly. To this end let $y\in H$ with $x\neq y$ and $0<\varepsilon< d(x,y)$. Let $y_\epsilon\in [x,y]$ such that $d(x,y_\epsilon)=\varepsilon$. By $(2)$ there exists a $\tau_w$-open set $U\subseteq U_x(y_\varepsilon)$ with $x\in U$. Therefore, we obtain $x_\alpha\in U_x(y_\varepsilon)$ eventually. By Lemma \ref{prop:elementary_incl} this yields $P_{[x,y]}x_\alpha\in [0,y_\varepsilon)$ and therefore $d(x,P_{[x,y]}x_\alpha)<\varepsilon$ eventually. \\ 
For the last statement in Theorem~\ref{thm:tau-w-exist} one can argue either through uniqueness of weak limits or use the last statement in Lemma \ref{prop:elementary_incl} together with $(2)$. 
\end{proof}

We end this subsection with three statements that relate the topology $\tau_w$ to the topology $\tau_{LP}$ introduced by Lytchak and Petrunin.

\begin{corollary}\label{cor:tau-w-tau-LP}
It holds that $\tau_w\subseteq \tau_{LP}$. 
\end{corollary}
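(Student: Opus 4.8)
The plan is to prove $\tau_w \subseteq \tau_{LP}$ by showing that every $\tau_w$-closed set is $\tau_{LP}$-closed, which suffices since a topology is determined by its closed sets. Recall from the excerpt that $\tau_{LP}$-closed sets are exactly those $A \subseteq H$ such that every \emph{bounded} sequence in $A$ that converges weakly has its weak limit in $A$. On the other hand, Proposition~\ref{lem:w-closed-sets} characterizes $\tau_w$-closed sets as exactly those $A$ such that every \emph{net} in $A$ converging weakly has its limit in $A$.

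The key observation is that the net condition is strictly stronger than the bounded-sequence condition, so the implication should go through almost immediately. First I would take an arbitrary $\tau_w$-closed set $A$ and aim to verify the defining condition for $\tau_{LP}$-closedness. So let $(x_n) \subseteq A$ be a bounded sequence converging weakly to some $x \in H$. A sequence is in particular a net, so $(x_n)$ is a weakly convergent net contained in $A$. Since $A$ is $\tau_w$-closed, Proposition~\ref{lem:w-closed-sets} gives $x \in A$. This establishes that $A$ satisfies the $\tau_{LP}$-closedness criterion, hence $A$ is $\tau_{LP}$-closed.

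Having shown that every $\tau_w$-closed set is $\tau_{LP}$-closed, I would conclude that $\tau_{LP}$ has at least as many closed sets as $\tau_w$, i.e.\ at least as many open sets, so $\tau_w \subseteq \tau_{LP}$.

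I do not anticipate a genuine obstacle here; the entire content is packaged into Proposition~\ref{lem:w-closed-sets} together with the explicit description of $\tau_{LP}$-closed sets quoted after the Lytchak--Petrunin theorem. The only point requiring a moment of care is making sure the two characterizations are being compared on the correct footing: $\tau_w$-closedness is phrased in terms of \emph{all} weakly convergent nets, while $\tau_{LP}$-closedness only tests \emph{bounded} weakly convergent sequences. The inclusion is in the direction where the stronger (net, unbounded-allowed) closure condition implies the weaker (bounded-sequence) one, so no boundedness hypothesis needs to be supplied and the argument is one-directional and clean.
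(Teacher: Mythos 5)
Your proof is correct and follows essentially the same route as the paper: the paper's proof also takes a $\tau_w$-closed set $A$, a bounded sequence in $A$ weakly converging to some $x$, and applies Proposition~\ref{lem:w-closed-sets} (viewing the sequence as a net) to conclude $x\in A$, hence $\tau_{LP}$-closedness. Your additional remarks about the direction of implication and the comparison of the two closure criteria are accurate but not needed beyond the core argument.
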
 
\begin{proof}
Suppose $A\subseteq H$ is $\tau_w$-closed. Let $(x_n)\subseteq A$ be a bounded sequence weakly converging to some $x$ in $H$. Then Proposition \ref{lem:w-closed-sets} yields $x\in A$. Hence $A$ is $\tau_{LP}$-closed. 
%Since $\tau_{lp}$ is sequential is suffices to prove that the identity map $(H,\tau_{lp})\rightarrow (H,\tau_w)$ is sequentially continuous. To this end let $(x_n)$ be a sequence in $H$ converging to $x\in H$ with respect to $\tau_{lp}$. Then $(x_n)$ is bounded and converges weakly to $x$. Hence $(x_n)$ converges to $x$ in $\tau_w$. 
\end{proof}

\begin{lemma}\label{lem:tau-w-tau-LP}
Let $A\subseteq H$ be a bounded subset. Then $A$ is $\tau_{LP}$-closed if and only if $A$ is $\tau_w$-closed. 
\end{lemma}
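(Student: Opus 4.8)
One implication is essentially free: since $\tau_w\subseteq\tau_{LP}$ by Corollary~\ref{cor:tau-w-tau-LP}, every $\tau_w$-closed set is $\tau_{LP}$-closed, and this uses no boundedness at all. So the entire content of the lemma lies in the converse, and my plan is to prove: \emph{a bounded, $\tau_{LP}$-closed set $A$ is $\tau_w$-closed}. The overall strategy is to trade the (net-based) $\tau_w$-closedness for the (sequence-based) defining property of $\tau_{LP}$ by producing a sequence out of a weakly convergent net.

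By Proposition~\ref{lem:w-closed-sets} it suffices to take an arbitrary net $(x_\alpha)\subseteq A$ with $x_\alpha\to x$ weakly and show $x\in A$. Since $A$ is bounded, \emph{any} sequence drawn from $A$ is automatically bounded; hence, by the definition of $\tau_{LP}$-closedness, it is enough to exhibit a sequence $(a_n)\subseteq A$ that converges weakly to the \emph{same} limit $x$. As a warm-up that also disposes of the degenerate situation, I would first record that $A$ being bounded and $\tau_{LP}$-closed forces $A$ to be metrically closed (a bounded metrically convergent sequence converges weakly), which settles the case $\limsup_\alpha d(x,x_\alpha)=0$.

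To produce the sequence in the remaining case I would invoke weak sequential compactness of bounded subsets of Hadamard spaces: every bounded sequence admits a weakly convergent subsequence. Extracting from the net a sequence of its members and passing to such a subsequence yields $a_n\to w$ weakly with $a_n\in A$; since $A$ is bounded and $\tau_{LP}$-closed, $w\in A$. It then remains to force $w=x$, and here I would use the following clean contradiction: weak convergence $a_n\to w$ applied in the direction $y=x$ (legitimate as $x\neq w$) gives $P_{[x,w]}a_n\to w$, whereas if the members $a_n$ are chosen so that $P_{[x,w]}a_n\to x$, then uniqueness of metric limits together with the disjointness $U_x(a)\cap U_w(a)=\emptyset$ from Lemma~\ref{prop:elementary_incl} yields $x=w\in A$. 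Throughout, the closed convex hull $C=\overline{\co}(A)$ is a bounded Hadamard space that is $\tau_w$-closed by Corollary~\ref{cor:convex}, so both $x$ and $w$ lie in $C$, and I may work entirely inside $C$.

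\textbf{Main obstacle.} The genuine difficulty is the coordination between the net and the extracted sequence. A general directed set carries no cofinal sequence, so the sequence $(a_n)$ need not be a subnet of $(x_\alpha)$ and its weak limit $w$ need not a priori agree with $x$; this is acute when $H$ is non-separable, where one cannot control all test directions $[x,y]$ at once. The plan to overcome this is to exploit that the only direction that matters is the single one towards the eventual limit, namely $[x,w]$, and that $w$ lies in the \emph{separable} closed convex hull $C_0=\overline{\co}\{a_n\}$ of the extracted sequence. I would therefore build the sequence and a countable dense set of directions in $C_0$ simultaneously by a diagonal (bootstrapping) construction, choosing each $a_n$ deep enough in the net that the projections of $a_n$ onto this countable family converge to $x$; nonexpansiveness of projections then upgrades this to $P_{[x,w]}a_n\to x$ for the limiting direction, closing the argument via the contradiction above. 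Making this bootstrap precise—so that the countable control set and the sequence it governs are chosen consistently—is the step I expect to require the most care; separability of $C_0$ and the metrizability of $\tau_g$ on compact subsets recorded in Theorem~\ref{thm:comp_implies_seq} are the structural facts I would lean on to carry it out.
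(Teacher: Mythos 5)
Your overall architecture is sound, and it is genuinely different from the paper's: the forward direction via Corollary~\ref{cor:tau-w-tau-LP}, the recasting of $\tau_w$-closedness through Proposition~\ref{lem:w-closed-sets}, the extraction of a weakly convergent subsequence from members of the net, and the conclusion $w\in A$ from $\tau_{LP}$-closedness are all correct. But be aware that the paper settles the hard direction in one line: Lemma~5.2 of Lytchak--Petrunin states that a \emph{bounded} net that converges weakly also converges with respect to $\tau_{LP}$; applied to $(x_\alpha)\subseteq A$ (bounded since $A$ is), this gives $x_\alpha\to x$ in $\tau_{LP}$, hence $x\in A$ since $\tau_{LP}$-closed sets are closed under net convergence in the topology $\tau_{LP}$. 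Your plan amounts to re-proving by hand exactly the special case of that lemma which is needed here.

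The genuine gap sits at the crux of your route, the step forcing $w=x$. First, ``nonexpansiveness of projections'' cannot perform the upgrade from $P_{[x,y]}a_n\to x$ for $y$ in a countable dense set to $P_{[x,w]}a_n\to x$: nonexpansiveness compares $P_\gamma u$ with $P_\gamma v$ for a \emph{fixed} geodesic $\gamma$, whereas you must compare $P_{[x,y]}a_n$ with $P_{[x,w]}a_n$ --- the same point projected onto two \emph{different} geodesics. What is actually needed is stability of projections under perturbation of the target: if $d(y,w)\le\delta$, then $[x,y]$ and $[x,w]$ are $\delta$-close in Hausdorff distance, and the $\mathrm{CAT}(0)$ inequality $d(z,q)^2\ge d(z,P_Cz)^2+d(P_Cz,q)^2$ for $q\in C$ convex yields $d(P_{[x,y]}z,P_{[x,w]}z)\le \delta+2\sqrt{\delta\,(R+\delta)}$ uniformly over all $z$ with $d(z,[x,w])\le R$. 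This is true, but it is a quantitative comparison estimate, not nonexpansiveness; mere convexity of $u\mapsto d(u,z)$ does not suffice (for distant $z$ the minimizer along a short geodesic is unstable), and the estimate is uniform only over \emph{bounded} sets of $z$ --- this is a second, unacknowledged, place where boundedness of $A$ must enter. Second, the diagonal construction is only announced, although it is the part you yourself identify as hardest; it does close (choose $a_{n+1}$ deep enough in the net to control the finitely many directions indexed so far from countable dense subsets $D_m$ of $\overline{\co}(\{x\}\cup\{a_1,\dots,a_m\})$, $m\le n$, and check that $\bigcup_m D_m$ is dense in $\overline{\co}(\{x\}\cup\{a_n:n\in\mathbb{N}\})$, which contains every weak subsequential limit), but it silently relies on separability of closed convex hulls of finite sets, which itself requires an argument. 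Two further cautions: Theorem~\ref{thm:comp_implies_seq} assumes $H$ separable and is not available here, and ``working entirely inside $C=\overline{\co}(A)$'' raises precisely the transfer-of-weak-convergence issue between $(C,d)$ and $(H,d)$ that the paper flags as nontrivial; your argument is better run in $H$ directly, where none of this is needed.
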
 
\begin{proof}
Suppose $A$ is $\tau_{LP}$-closed. We use the characterization of $\tau_w$-closed sets in Proposition \ref{lem:w-closed-sets} to show that $A$ is $\tau_w$-closed. To this end let $(x_\alpha)\subseteq A$ be a net weakly converging to a point $x\in H.$ Then Lemma $5.2$ in \cite{lytchak2021weak} yields $x_\alpha\to x$ in $\tau_{LP}.$ Hence $x\in A$. The other implication is due to the previous Corollary.
\end{proof}

\begin{corollary}\label{cor:w-equal-LP}
If $(H,d)$ is bounded, then $\tau_w= \tau_{LP}$. In particular, $\tau_w$ is sequential for bounded Hadamard spaces.
\end{corollary}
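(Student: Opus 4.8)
The plan is to read off the equality $\tau_w = \tau_{LP}$ directly from Lemma~\ref{lem:tau-w-tau-LP}, exploiting the fact that boundedness of the ambient space trivializes the hypothesis of that lemma. Since $(H,d)$ is bounded, \emph{every} subset $A \subseteq H$ is automatically a bounded subset, so Lemma~\ref{lem:tau-w-tau-LP} applies without any restriction: for each $A \subseteq H$, one has that $A$ is $\tau_{LP}$-closed if and only if $A$ is $\tau_w$-closed. Hence the two topologies possess exactly the same collection of closed sets. The first thing I would do is spell out this observation and confirm that the quantification over subsets is genuinely unrestricted once boundedness is in force.

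Next I would invoke the elementary principle that a topology on a fixed underlying set is completely determined by its family of closed sets, these being precisely the complements of the open sets. Consequently, two topologies on $H$ that share the same closed sets necessarily share the same open sets, and are therefore identical. Applying this to $\tau_w$ and $\tau_{LP}$ yields $\tau_w = \tau_{LP}$, which is the first assertion of the corollary. (For bookkeeping one may note that the inclusion $\tau_w \subseteq \tau_{LP}$ is already furnished by Corollary~\ref{cor:tau-w-tau-LP}, so strictly only the reverse inclusion needs the closed-set argument; but the closed-set identification gives both at once.)

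For the concluding \emph{in particular}, I would appeal to property $(ii)$ of the Lytchak--Petrunin theorem recalled in the introduction, which asserts that $\tau_{LP}$ is sequential. Since we have just established $\tau_w = \tau_{LP}$ on every bounded Hadamard space, the topology $\tau_w$ inherits sequentiality verbatim, with no further argument required.

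There is essentially no analytic obstacle in this proof: all of the substantive work is already packaged inside Lemma~\ref{lem:tau-w-tau-LP} (and, through it, inside Lemma~$5.2$ of~\cite{lytchak2021weak}). The only step that warrants a moment of care is the passage from \emph{equal closed sets} to \emph{equal topologies}; this is a purely set-theoretic fact that uses no metric or $\CAT(0)$ structure, so the main thing to verify is simply that boundedness upgrades Lemma~\ref{lem:tau-w-tau-LP} from a statement about a distinguished class of sets to a statement about \emph{all} subsets of $H$.
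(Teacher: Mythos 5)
Your proof is correct and is exactly the argument the paper intends: since boundedness of $(H,d)$ makes every subset bounded, Lemma~\ref{lem:tau-w-tau-LP} identifies the closed sets of $\tau_w$ and $\tau_{LP}$, forcing the topologies to coincide, and sequentiality then transfers from $\tau_{LP}$ via property~(ii) of the Lytchak--Petrunin theorem. The paper leaves this corollary without a written proof precisely because it follows in this immediate way, so there is nothing to add.
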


\subsection{Weak convergence of bounded sequences and nets} 
It follows from Corollary~\ref{cor:w-equal-LP} that $\tau_w$ always realizes weak convergence of bounded sequences. Nevertheless, in order to keep our paper self-contained, we provide another proof of this fact that does not invoke results from \cite{lytchak2021weak}.  \\
%We use the definition of compactness given by open coverings, in particular not every compact topological space is Hausdorff. 
Recall that a topological space is compact if and only if every net has a convergent subnet.

\begin{lemma}[compactness]\label{lem:w-compact}
If $K\subseteq H$ is bounded and $\tau_w$-closed, then $K$ is $\tau_w$-compact. In particular, every metrically closed ball is $\tau_w$-compact. 
\end{lemma}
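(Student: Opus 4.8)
\emph{Reduction.} The plan is to apply the compactness criterion recalled just above the statement: it suffices to show that every net $(x_\alpha)_{\alpha\in A}$ in $K$ admits a subnet that converges with respect to $\tau_w$ to some point of $K$. By Proposition~\ref{prop:w-convergence}, weak convergence implies $\tau_w$-convergence, so it is enough to produce a subnet converging \emph{weakly} to some $x\in H$. Since any subnet of a net in $K$ again lies in $K$ and $K$ is $\tau_w$-closed, Proposition~\ref{lem:w-closed-sets} then automatically gives $x\in K$. Thus the whole statement reduces to the classical-type fact that \emph{every bounded net in a complete $\CAT(0)$ space has a weakly convergent subnet}, which I would prove directly in order to keep the argument self-contained.

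\emph{The candidate limit.} First I would fix an ultrafilter $\mathcal{U}$ on the directed set $A$ containing every tail $T_\beta=\{\alpha\in A:\alpha\succcurlyeq\beta\}$; such $\mathcal{U}$ exists because the tails have the finite intersection property. As $K$ is bounded, for each $z\in H$ the real net $(d(x_\alpha,z)^2)_\alpha$ is bounded, so its ultrafilter limit exists, and I set $f(z):=\lim_{\mathcal{U}}d(x_\alpha,z)^2$. Passing the $\CAT(0)$ comparison inequality, valid for any geodesic $\gamma\colon[0,1]\to H$,
\[ d(x_\alpha,\gamma(t))^2\le (1-t)\,d(x_\alpha,\gamma(0))^2+t\,d(x_\alpha,\gamma(1))^2-t(1-t)\,d(\gamma(0),\gamma(1))^2, \]
to the limit along $\mathcal{U}$ shows that $f$ is strongly convex; together with coercivity and completeness of $H$ this yields a unique minimizer $x:=\arg\min_H f$, which is the candidate weak limit.

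\emph{The key estimate.} The heart of the argument, and the step I expect to be the main obstacle, is to show that $\lim_{\mathcal{U}}P_{[x,y]}x_\alpha=x$ for every $y\in H\setminus\{x\}$. Writing $m_\alpha:=P_{[x,y]}x_\alpha$, the projection lies in the compact geodesic $[x,y]$, so along $\mathcal{U}$ it converges to some $m^\ast\in[x,y]$. The projection inequality onto the convex set $[x,y]$ gives $d(x_\alpha,x)^2\ge d(x_\alpha,m_\alpha)^2+d(m_\alpha,x)^2$; taking $\mathcal{U}$-limits and using $m_\alpha\to m^\ast$ (so that $\lim_{\mathcal{U}}d(x_\alpha,m_\alpha)^2=f(m^\ast)$ by the triangle inequality) yields $f(x)\ge f(m^\ast)+d(x,m^\ast)^2$. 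Since $x$ minimizes $f$, this forces $d(x,m^\ast)=0$, i.e.\ $m^\ast=x$, and hence $\lim_{\mathcal{U}}d(x,P_{[x,y]}x_\alpha)=0$ for every $y\neq x$.

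\emph{Conclusion.} Finally I would pass from the ultrafilter to an honest subnet: the canonical subnet indexed by $D=\{(\beta,U):\beta\in U\in\mathcal{U}\}$, ordered by $\beta$ increasing and $U$ decreasing, satisfies $P_{[x,y]}x_{(\beta,U)}\to x$ for each $y\neq x$, because every set $\{\alpha:d(x,P_{[x,y]}x_\alpha)<\varepsilon\}$ belongs to $\mathcal{U}$. This subnet thus converges weakly to $x$, completing the reduction and proving that $K$ is $\tau_w$-compact. For the final sentence, a metrically closed ball is convex and metrically closed, hence $\tau_w$-closed by Corollary~\ref{cor:convex}, and it is bounded, so it is $\tau_w$-compact by the first part.
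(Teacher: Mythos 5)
Your proof is correct, but it takes a genuinely different route from the paper on the key ingredient. The paper's proof is essentially a two-line citation: it invokes Proposition~3.5 of Kirk--Panyanak~\cite{Kirk} for the fact that every bounded net in a complete $\CAT(0)$ space has a weakly convergent subnet, combines this with Proposition~\ref{prop:w-convergence}, and settles the second statement via Corollary~\ref{cor:convex} --- the last two steps being exactly your reduction. What you do differently is to reprove the Kirk--Panyanak ingredient from scratch: you take an ultrafilter $\mathcal{U}$ containing the tails, form the function $f(z)=\lim_{\mathcal{U}}d(x_\alpha,z)^2$, use the $\CAT(0)$ comparison inequality to get strong convexity and hence a unique minimizer $x$ (this is the asymptotic-center construction), show via the projection inequality $d(x_\alpha,x)^2\ge d(x_\alpha,P_{[x,y]}x_\alpha)^2+d(P_{[x,y]}x_\alpha,x)^2$ that the $\mathcal{U}$-limit of the projections is $x$ for every $y\neq x$, and then pass to the canonical subnet indexed by $\{(\beta,U):\beta\in U\in\mathcal{U}\}$. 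All of these steps check out; in particular your ordering of the canonical subnet (both $\beta$ increasing and $U$ decreasing) is the right one to make it a genuine subnet that is eventually inside every member of $\mathcal{U}$, which is what gives weak convergence simultaneously for all $y$. The only point you leave tacit is the (local Lipschitz) continuity of $f$, needed so that the limit of a minimizing sequence is in fact a minimizer; this is routine. Your approach buys self-containedness and exposes the mechanism behind weak compactness of bounded sets --- essentially reproducing the argument underlying the cited result --- at the cost of length; the paper's approach buys brevity by outsourcing exactly this step to~\cite{Kirk}.
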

\begin{proof}
First notice that due to \cite[Proposition 3.5.]{Kirk} every net in the bounded set $K$ has a weakly convergent subnet. Together with Proposition \ref{prop:w-convergence} this yields compactness of $K$. The second statement follows from Corollary \ref{cor:convex} since metrically closed balls are convex. 
\end{proof}

\begin{proposition} 
Let $x\in H$ and $(x_n)$ be a sequence in $H\setminus \{x\}$ . If $x_n\to x$ weakly, then 
the set \[ A:=\{ x_n \colon n\in \mathbb{N} \} \cup \{x\} \] is $\tau_w$-closed.
\end{proposition}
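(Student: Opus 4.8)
The plan is to verify the characterization of $\tau_w$-closedness furnished by Proposition~\ref{lem:w-closed-sets}: I would show that whenever a net $(a_\beta)_{\beta\in B}$ in $A$ converges weakly to some $p\in H$, then $p\in A$. The argument rests on two auxiliary facts that I would record first. The first is the uniqueness of weak limits of nets: if $a_\beta\to p$ and $a_\beta\to q$ weakly with $p\neq q$, then taking $y=q$ in the definition of weak convergence to $p$ and $y=p$ in the definition of weak convergence to $q$ forces the single net $P_{[p,q]}a_\beta=P_{[q,p]}a_\beta$ to converge metrically to both $p$ and $q$, a contradiction (alternatively this follows from the disjointness $U_x(a)\cap U_y(a)=\emptyset$ in Lemma~\ref{prop:elementary_incl}). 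The second is the elementary observation that weak convergence of a net passes to every subnet, since the defining metric convergences $P_{[p,y]}a_\beta\to p$ do.

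With these in hand I would split into cases according to the behavior of the net. If $\{\beta:a_\beta=x\}$ is cofinal in $B$, then the corresponding constant subnet converges weakly to $x$; being a subnet of $(a_\beta)$ it also converges weakly to $p$, so uniqueness gives $p=x\in A$. Otherwise $a_\beta\neq x$ eventually, and after passing to a tail (a subnet, hence with the same weak limit) I may assume $a_\beta=x_{n(\beta)}$ for some index map $n\colon B\to\mathbb{N}$.

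Now I would distinguish two further cases for the index map. If $n(\beta)\to\infty$, in the sense that for every $N$ one has $n(\beta)>N$ eventually, then I claim $x_{n(\beta)}\to x$ weakly: fixing $y\neq x$ and $\varepsilon>0$, the metric convergence $P_{[x,y]}x_n\to x$ as $n\to\infty$ provides $N$ with $d(P_{[x,y]}x_n,x)<\varepsilon$ for $n>N$, and since $n(\beta)>N$ eventually the same estimate holds along the net; uniqueness then yields $p=x\in A$. If instead $n(\beta)\not\to\infty$, there is $N$ for which $\{\beta:n(\beta)\le N\}$ is cofinal; since this set is the finite union $\bigcup_{k\le N}\{\beta:n(\beta)=k\}$ and a directed set cannot be covered by finitely many non-cofinal sets, some value $k$ yields a cofinal set $\{\beta:n(\beta)=k\}$. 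The associated constant subnet $x_k$ converges weakly to $x_k$ and, as a subnet, also to $p$, so $p=x_k\in A$. In every case $p\in A$, which proves the claim.

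The main obstacle I anticipate is net-theoretic bookkeeping rather than metric geometry: one must argue carefully that a cofinal subset of a directed set is a subnet, invoke the pigeonhole principle for directed sets, and track the interplay of the ``frequently/eventually'' quantifiers when reducing to the index map. Once these are in place, the uniqueness of weak limits closes each case uniformly.
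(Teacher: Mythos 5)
Your proof is correct, but it follows a genuinely different route from the paper's. The paper argues directly from the definition of $\tau_w$: for each $y\in H\setminus A$ it exhibits an explicit finite intersection of elementary sets around $y$ that misses $A$, namely $U_y(m)\cap U_y(x_1)\cap\cdots\cap U_y(x_N)$, where $m$ is the midpoint of $[x,y]$ and $N$ is chosen (using weak convergence) so that $x_n\in U_x(m)$ for all $n>N$; the disjointness $U_x(m)\cap U_y(m)=\emptyset$ from Lemma~\ref{prop:elementary_incl} then excludes $x$ and the tail of the sequence from $U_y(m)$, while the sets $U_y(x_i)$ exclude the finitely many remaining points. You instead invoke the net characterization of $\tau_w$-closedness (Proposition~\ref{lem:w-closed-sets}) and show that every weakly convergent net in $A$ has its limit in $A$, via a case analysis (the value $x$ occurs cofinally; the indices $n(\beta)$ tend to infinity; some fixed index $k$ occurs cofinally), the pigeonhole principle for directed sets, and uniqueness of weak limits. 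Your net-theoretic steps all check out: cofinal subsets of directed sets are directed and give subnets, a cofinal set that is a finite union has a cofinal member, weak convergence passes to subnets, and weak limits of nets are unique (your projection argument on $[p,q]$ is valid). Comparing the two: the paper's proof is shorter and produces explicit $\tau_w$-neighborhoods witnessing openness of $H\setminus A$; yours is heavier on ``frequently/eventually'' bookkeeping but isolates reusable facts of independent interest --- uniqueness of weak limits and subnet stability, which the paper itself alludes to later (e.g., in the proof of Theorem~\ref{thm:tau-w-exist}) --- and it pins down exactly which point of $A$ any weakly convergent net in $A$ must converge to.
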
 
\begin{proof}
Let $y$ be an element in the complement of $A$. Denote by $m$ the midpoint between $x$ and $y$. Then there exists $N\in \mathbb{N}$ such that $x_n\in U_x(m)$ for all $n > N$. Moreover, $x\in U_x(m)$. Lemma \ref{prop:elementary_incl} yields $x_n \notin U_y(m)$  for all $n>N$ and $x\notin U_y(m)$. Hence 
\[ U_y(m) \cap U_y(x_1) \cap \ldots \cap U_y(x_N) \subseteq H\setminus A.\]
Therefore, $H\setminus A$ is $\tau_w$-open. 
\end{proof}
\begin{lemma}[bounded sequences] 
Let $(x_n)$ be a bounded sequence in $H$ and $x\in H$. Then $x_n \rightarrow x$ with respect to $\tau_w$ if and only if $x_n\rightarrow x$ weakly. 
\end{lemma}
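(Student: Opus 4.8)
The forward implication requires no work beyond what is already available: weak convergence implies $\tau_w$-convergence for every net by Proposition~\ref{prop:w-convergence}, and a sequence is a net. The entire content is the converse, and this is where the boundedness hypothesis is indispensable. My plan is to argue by contradiction. Assuming $x_n \to x$ in $\tau_w$ but $x_n \not\to x$ weakly, I would first extract a subsequence that witnesses the failure of the projection condition for a single target point $y$, then use boundedness to pass to a weakly convergent sub-subsequence, and finally show that the weak limit of this sub-subsequence is forced to be $x$ itself, contradicting the witnessed failure.

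Two preliminary observations set this up. First, singletons are $\tau_w$-closed: by Proposition~\ref{lem:w-closed-sets} it suffices to note that the only net contained in $\{p\}$ is the constant net at $p$, and such a net converges weakly only to $p$, since for $w \neq p$ the choice $y = p$ in Definition~\ref{def:weak-conv} gives $P_{[w,p]}p = p \neq w$. Second, I would invoke the standard fact (see, e.g., \cite{Kirk}) that in a complete CAT(0) space every bounded sequence admits a weakly convergent \emph{subsequence}. This sequential form is essential: Kirk's theorem as used in Lemma~\ref{lem:w-compact} only yields weakly convergent subnets, and a subnet will not do here, because the tool for pinning down the limit — the proposition immediately preceding this lemma — is genuinely sequential, its finite-intersection construction of a neighborhood not surviving the passage to general nets.

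For the main step, suppose $x_n \to x$ in $\tau_w$ and, toward a contradiction, that for some $y \neq x$, some $\varepsilon > 0$, and some subsequence one has $d(x, P_{[x,y]}x_{n_k}) \geq \varepsilon$ for all $k$; in particular $x_{n_k} \neq x$. Using boundedness, pass to a weakly convergent sub-subsequence $x_{n_{k_j}} \to x'$, and claim $x' = x$. If infinitely many of these terms equal $x'$, the corresponding constant subsequence is a subsequence of $(x_n)$ and hence $\tau_w$-converges to $x$; as $\{x'\}$ is $\tau_w$-closed, this forces $x = x'$. Otherwise, discard the finitely many terms equal to $x'$ to obtain a sequence $(z_j)$ with $z_j \neq x'$ that still converges weakly to $x'$ and $\tau_w$-converges to $x$, and with $z_j \neq x$ for all $j$; the preceding proposition shows that $\{z_j : j \in \mathbb{N}\} \cup \{x'\}$ is $\tau_w$-closed, so its $\tau_w$-limit $x$ lies in it, and since $x$ differs from every $z_j$ we again get $x = x'$. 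With the claim in hand, $x_{n_{k_j}} \to x$ weakly yields $P_{[x,y]}x_{n_{k_j}} \to x$, contradicting $d(x, P_{[x,y]}x_{n_{k_j}}) \geq \varepsilon$.

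The main obstacle is the potential failure of the Hausdorff property for $\tau_w$: one cannot simply declare the two $\tau_w$-limits $x$ and $x'$ of the sub-subsequence to be equal, nor that $x$ belongs to the $\tau_w$-interior of an elementary set — which is precisely the condition that may fail, cf. Theorem~\ref{thm:tau-w-exist}. The argument sidesteps this entirely by routing the uniqueness through weak convergence together with the explicit closed set produced by the preceding proposition; the two degenerate possibilities, that some terms coincide with $x$ or with $x'$, are exactly what the case split above is designed to absorb.
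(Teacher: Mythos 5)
Your proof is correct and takes essentially the same route as the paper's: both use boundedness to extract a weakly convergent subsequence (via Kirk), apply the preceding proposition to force the $\tau_w$-limit $x$ into the $\tau_w$-closed set $\{x_k\}\cup\{x'\}$, and thereby identify $x$ with the weak limit, with the forward direction coming from Proposition~\ref{prop:w-convergence}. The only differences are cosmetic: the paper argues via the subsequence principle (every subsequence has a further subsequence converging weakly to $x$) with a blanket assumption $x_n\neq x$, whereas you argue by contradiction from a witnessing subsequence and handle the degenerate cases (terms equal to $x$ or to the weak limit $x'$) explicitly — in fact your case split addresses the requirement $x_k\neq x'$ of the preceding proposition, a point the paper's write-up glosses over.
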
 
\begin{proof}
Without loss of generality we may assume that $x_n\neq x$ for all $n\in \mathbb{N}$. 
Suppose $x_n \rightarrow x$ with respect to $\tau_w$ and let $(x_{m})$ be a subsequence of $(x_n)$.  Due to boundedness of $(x_{m})$ there exists a subsequence $(x_k)$ of $(x_{m})$ and $y\in H$, such that $x_k\to y$ weakly \cite[Section 3.]{Kirk}. By the last proposition $A:=\{ x_k \colon k\in \mathbb{N} \} \cup \{y\}$ is $\tau_w$-closed. The convergence of $(x_k)$ to $x$ with respect to $\tau_w$ shows that $x\in A$. Hence $x=y.$ Hence every subsequence of $(x_n)$ has a subsequence that weakly converges to $x$. This shows that $x_n \rightarrow x$ weakly. The converse is due to Proposition \ref{prop:w-convergence}.
\end{proof}

\begin{lemma}[bounded nets]\label{lem:bounded nets}
Suppose that $(H,\tau_w)$ is Hausdorff. Let $(x_\alpha)$ be a bounded net in $H$ and $x\in H$. Then $x_\alpha \rightarrow x$ with respect to $\tau_w$ if and only if $x_\alpha\rightarrow x$ weakly. 
\end{lemma}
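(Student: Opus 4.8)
The plan is to mirror the structure of the earlier bounded-sequence lemma, replacing the sequential extraction argument by a subnet extraction argument, and to see exactly where the Hausdorff hypothesis enters. First I would reduce to the nontrivial direction: the converse (weak convergence implies $\tau_w$-convergence) is already handed to us by Proposition~\ref{prop:w-convergence} with no boundedness or Hausdorff assumption, so the whole content is in showing that $x_\alpha \to x$ in $\tau_w$ forces $x_\alpha \to x$ weakly. As in the sequential case, I may assume $x_\alpha \neq x$ for all $\alpha$ without loss of generality.

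For the main direction I would argue through subnets. Suppose $x_\alpha \to x$ with respect to $\tau_w$. Let $(x_\beta)$ be an arbitrary subnet. Since $(x_\alpha)$ is bounded, so is $(x_\beta)$, and by \cite[Section 3.]{Kirk} (the same source used for the existence of weakly convergent subnets in Lemma~\ref{lem:w-compact}) there is a further subnet $(x_\gamma)$ and a point $y \in H$ with $x_\gamma \to y$ weakly. Now Proposition~\ref{prop:w-convergence} applied to this subnet gives $x_\gamma \to y$ with respect to $\tau_w$. But $(x_\gamma)$ is also a subnet of the original net $(x_\alpha)$, which converges to $x$ in $\tau_w$, so $x_\gamma \to x$ in $\tau_w$ as well. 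Here is precisely where I would invoke the \textbf{Hausdorff hypothesis}: in a Hausdorff space limits of nets are unique, so $x = y$, and therefore $x_\gamma \to x$ weakly.

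To finish I would use the standard net analogue of the ``every subsequence has a convergent sub-subsequence'' criterion: if $(x_\alpha)$ is a net and $x$ is a point such that every subnet of $(x_\alpha)$ admits a further subnet converging to $x$ (in a given topology, or with respect to a convergence that is topological), then $(x_\alpha)$ itself converges to $x$. Applied to weak convergence via $\tau_w$ (recall that $\tau_w$-convergence is a genuine topological convergence and, by Proposition~\ref{lem:w-closed-sets}, weak convergence of nets is exactly convergence against the $\tau_w$-closed sets), the argument of the previous paragraph shows every subnet of $(x_\alpha)$ has a further subnet converging weakly to $x$, whence $x_\alpha \to x$ weakly.

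The step I expect to be the main obstacle, or at least the one requiring the most care, is the last one: the subnet characterization of convergence is slightly more delicate for nets than for sequences, since ``subnet'' is a more permissive notion than ``subsequence'' and one must make sure the criterion is applied to an honest topological convergence. I would either phrase the subnet criterion directly in terms of $\tau_w$-convergence (which is unambiguously topological) and then transfer to weak convergence through the equivalence of Proposition~\ref{lem:w-closed-sets}, or quote the standard fact that a net converges to $x$ in a topological space iff every subnet has a subnet converging to $x$. The Hausdorff assumption is genuinely needed and not merely convenient, since without uniqueness of $\tau_w$-limits the identification $x = y$ collapses; this is consistent with its appearance as a hypothesis in the statement and with its role in Theorem~\ref{thm:tau-w-exist}.
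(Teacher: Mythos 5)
Your argument is correct and is essentially the paper's own proof: the converse direction is dispatched by Proposition~\ref{prop:w-convergence}; the forward direction extracts from an arbitrary subnet a weakly convergent further subnet using boundedness (via \cite{Kirk}), upgrades this to $\tau_w$-convergence by Proposition~\ref{prop:w-convergence}, identifies the two limits using the Hausdorff hypothesis exactly as you do, and concludes by the subnet criterion.

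One caution about the final step, which you rightly flagged as the delicate one: of your two proposed justifications, the one that ``transfers to weak convergence through Proposition~\ref{lem:w-closed-sets}'' does not work. That proposition identifies the weakly net-closed sets with the $\tau_w$-closed sets, but this does not let you pass from $\tau_w$-convergence back to weak convergence; indeed, whether $\tau_w$-convergence implies weak convergence is precisely the property that can fail (this is the content of Theorem~\ref{thm:tau-w-exist}), since weak convergence is not known to be topological. For the same reason, quoting the subnet criterion ``in a topological space'' does not apply verbatim to weak convergence. The correct (and easy) repair is to verify the criterion for weak convergence directly, coordinate-wise: if $x_\alpha \not\to x$ weakly, there exist $y\in H\setminus\{x\}$, $\varepsilon>0$ and a cofinal set of indices $\alpha$ with $d\left(P_{[x,y]}x_\alpha, x\right)\geq \varepsilon$; these indices form a subnet of $(x_\alpha)$, no further subnet of which can converge weakly to $x$, contradicting what you established. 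This is the same step the paper takes implicitly, so your proof stands with this small fix.
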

\begin{proof}
Suppose $x_\alpha \rightarrow x$ with respect to $\tau_w$ and let $(x_\beta)$ be a subnet of $(x_\alpha)$. Boundedness of  $(x_\beta)$ yields the existence of a subnet $(x_\delta)$ of $(x_\beta)$ that converges weakly to some $y\in H$. Due to Proposition \ref{prop:w-convergence} we obtain $x_\delta \rightarrow y$ with respect to $\tau_w$. Moreover, $x_\delta\rightarrow x$  with respect to $\tau_w$. The Hausdorff property implies $x=y$. Hence every subnet of $(x_\alpha)$ has a subnet that weakly converges to $x$. This shows that $x_\alpha \rightarrow x$ weakly. The converse is due to Proposition \ref{prop:w-convergence}.
\end{proof}
\begin{corollary}[bounded spaces] \label{cor:bounded_space_w}
Suppose $(H,d)$ is bounded. Then the following statements are equivalent: 
\begin{enumerate}
\item $\tau_w$ realizes weak convergence.
\item For all $x\in H$ and $y\in H\setminus \{x\}$ the point $x$ belongs to the $\tau_w$-interior of $U_x(y)$.
\item $\tau_w$ satisfies the Hausdorff property. 
\end{enumerate}
\end{corollary}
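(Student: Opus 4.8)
The plan is to establish the equivalence by closing the cycle $(1)\Rightarrow(2)\Rightarrow(3)\Rightarrow(1)$, and to note at the outset that almost all of the required work has already been done in full generality, with no boundedness assumption. Indeed, Theorem~\ref{thm:tau-w-exist} proves the equivalence $(1)\Leftrightarrow(2)$ for arbitrary $\mathrm{CAT}(0)$ spaces, and its concluding sentence shows that whenever $\tau_w$ realizes weak convergence it is Hausdorff, i.e.\ $(1)\Rightarrow(3)$. Combining these gives $(1)\Leftrightarrow(2)$ and $(2)\Rightarrow(1)\Rightarrow(3)$ for free. Consequently the only implication still to prove, and the only place where the hypothesis that $(H,d)$ is bounded enters, is $(3)\Rightarrow(1)$.

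For $(3)\Rightarrow(1)$ I would argue as follows. Assume $(H,\tau_w)$ is Hausdorff. Since $(H,d)$ is bounded, \emph{every} net in $H$ is automatically bounded. Hence Lemma~\ref{lem:bounded nets} applies to an arbitrary net: for all nets $(x_\alpha)$ in $H$ and all $x\in H$, one has $x_\alpha\to x$ with respect to $\tau_w$ if and only if $x_\alpha\to x$ weakly. This biconditional, now valid for all nets rather than merely bounded ones, is precisely the statement that $\tau_w$ realizes weak convergence of nets, which is $(1)$. This completes the cycle and hence the equivalence.

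The only delicate observation — and there is no genuine obstacle beyond it — is that boundedness of the ambient space upgrades Lemma~\ref{lem:bounded nets}, which a priori concerns only bounded nets, to a statement about \emph{all} nets, while the Hausdorff assumption in $(3)$ supplies exactly the hypothesis that Lemma~\ref{lem:bounded nets} requires. All of the analytic content, namely the subnet extraction of weakly convergent subnets via \cite[Section 3.]{Kirk} together with uniqueness of weak limits, is already packaged inside Lemma~\ref{lem:bounded nets} and Theorem~\ref{thm:tau-w-exist}, so the present corollary reduces to correctly assembling these pieces.
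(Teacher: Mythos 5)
Your proof is correct and follows exactly the paper's route: the paper's proof is precisely the combination of Theorem~\ref{thm:tau-w-exist} (giving $(1)\Leftrightarrow(2)$ and $(1)\Rightarrow(3)$) with Lemma~\ref{lem:bounded nets} (giving $(3)\Rightarrow(1)$, since boundedness of the space makes every net bounded). Your write-up simply makes explicit the assembly that the paper leaves as ``follows directly.''
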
 
\begin{proof}
This follows directly from Theorem \ref{thm:tau-w-exist} and Lemma \ref{lem:bounded nets}. 
\end{proof}

\begin{remark}
In particular, on a bounded $\mathrm{CAT(0)}$ space the conditions of the previous corollary are satisfied if $\tau_{co}$ is Hausdorff since $\tau_{co}\subseteq \tau_{w}$. Moreover, by Lemma~\ref{lem:w-compact}, $\tau_{w}$ is compact on a bounded space; hence $\tau_{co} = \tau_{w}$ (since two topologies that are both compact and Hausdorff and where one topology contains the other must agree).
\end{remark}

\begin{remark} 
It remains open whether the boundedness assumption in Corollary \ref{cor:bounded_space_w} can be dropped or if there exists a space $H$ such that $(H,\tau_w)$ is Hausdorff but $\tau_w$ does not realize weak convergence.
\end{remark} 

\section{Results on \texorpdfstring{$\tau_g$}{taug}}

\subsection{Continuous projections}
We show that $\tau_g$ is the initial topology with respect to the projections $P_\gamma\colon (H,\tau_g)\rightarrow (\gamma,d)$ for $\gamma \in \Gamma(H)$. Moreover, we characterize spaces for which 
$\tau_g$ realizes weak convergence. 
\begin{lemma}[continuous projections] \label{lem:proj}
The projection map $P_\gamma\colon (H,\tau_g)\rightarrow (\gamma,d)$ is continuous for all $\gamma \in \Gamma(H)$. Moreover, $\tau_g$ is the coarsest topology on $H$ with this property. 
In other words, $\tau_g$ is the initial topology of the family of maps $P_\gamma\colon (H,\tau_g)\rightarrow (\gamma,d)$ for $\gamma\in \Gamma(H)$. 
\end{lemma}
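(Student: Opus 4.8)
The plan is to establish the two assertions separately: first that each $P_\gamma$ is $\tau_g$-continuous, and then that $\tau_g$ is contained in every topology that renders all the $P_\gamma$ continuous. Together these identify $\tau_g$ with the initial topology of the family $(P_\gamma)_{\gamma \in \Gamma(H)}$. I expect the second assertion to be nearly immediate, while the continuity statement carries the technical weight.

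For the coarsest/initial part, let $\tau$ be any topology on $H$ for which every $P_\gamma \colon (H,\tau) \to (\gamma,d)$ is continuous. Fix $x \neq y$ and set $\gamma = [x,y]$, which is a compact geodesic, so $\gamma \in \Gamma(H)$. Since $\{y\}$ is $d$-closed in $\gamma$, the set $\gamma \setminus \{y\}$ is open in $(\gamma,d)$, and by definition $U_x(y) = \{z : P_\gamma z \neq y\} = P_\gamma^{-1}(\gamma \setminus \{y\})$ is therefore $\tau$-open. As the elementary sets $U_x(y)$ form a subbasis for $\tau_g$, this yields $\tau_g \subseteq \tau$. In particular $\tau_g$ is coarser than the initial topology, and once continuity is shown it will follow that $\tau_g$ equals it.

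The heart of the proof is continuity. I would parametrize a nondegenerate $\gamma \in \Gamma(H)$ by arclength as $\gamma\colon [0,L] \to H$ with $p = \gamma(0)$ and $q = \gamma(L)$, the degenerate case of a single point being trivial. The topology on $(\gamma,d) \cong [0,L]$ admits the initial and terminal segments $\gamma([0,b))$ and $\gamma((a,L])$ as a subbasis, and since preimages commute with unions and finite intersections it suffices to show that $P_\gamma^{-1}$ of each such segment is $\tau_g$-open. The key step is the identity
\[ U_p(a) = \{ z \in H : P_\gamma z \in [p,a) \} \qquad \text{for every interior point } a \in \gamma. \]
The inclusion $\subseteq$ is precisely Lemma~\ref{prop:elementary_incl} applied with $x = p$ and $y = q$; for $\supseteq$ I would use that $\phi = d(\cdot,z)$ is convex along $\gamma$, so that if its minimizer on $\gamma$ lies in $[p,a)$ then the same point minimizes $\phi$ over the subsegment $[p,a]$, forcing $P_{[p,a]} z \neq a$ and hence $z \in U_p(a)$. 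Granting this, the subbasic preimages split into two cases: for an interior endpoint one gets $P_\gamma^{-1}(\gamma([0,b))) = U_p(\gamma(b))$ and symmetrically $P_\gamma^{-1}(\gamma((a,L])) = U_q(\gamma(a))$, which are subbasic $\tau_g$-open sets; for the extreme segments reaching an endpoint of $\gamma$ one writes $\{z : P_\gamma z \neq q\} = \bigcup_{0<s<L} U_p(\gamma(s))$ and symmetrically at $p$, a union of elementary sets and hence again $\tau_g$-open. This exhausts the subbasis of $(\gamma,d)$, establishing continuity and thereby the identification of $\tau_g$ as the initial topology.

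The main obstacle is the $\supseteq$ direction of the displayed identity: one must compare the projection onto the full geodesic $\gamma$ with the projection onto the subsegment $[p,a]$ that appears in the definition of $U_p(a)$, and the clean bridge is the convexity of $d(\cdot,z)$ along $\gamma$ combined with Lemma~\ref{prop:elementary_incl}. The only remaining care is the bookkeeping at the two endpoints of $\gamma$, where the relevant preimage is not a single elementary set but must instead be exhibited as a union of them.
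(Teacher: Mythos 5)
Your proof is correct and follows essentially the same route as the paper: both arguments reduce continuity to subbasic sets of the segment and identify their $P_\gamma$-preimages with elementary sets by comparing projections onto $\gamma$ and onto its subsegments via convexity of $d(\cdot,z)$ and uniqueness of nearest points, and both obtain the coarseness statement from $U_x(y)=P_{[x,y]}^{-1}\left([x,y)\right)$. The only difference is cosmetic: the paper works with closed subintervals, proving $P_\gamma^{-1}([x,y])=\left(H\setminus U_a(x)\right)\cap\left(H\setminus U_b(y)\right)$ for $\gamma=[a,b]$, whereas you work with the complementary open rays, reuse Lemma~\ref{prop:elementary_incl} for one inclusion, and handle the rays reaching an endpoint of $\gamma$ as unions of elementary sets.
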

\begin{proof}
Let $ \gamma=[a,b] \in \Gamma(H)$.  To show
 the first statement it suffices to show that $P_\gamma^{-1} ([x,y])$ is $\tau_g$-closed for all $x,y\in \gamma$ with $d(a,x)\leq d(a,y)$. To this end, we will show the following identity 
 \begin{align}\label{eq:preimage_of_closed_interval}
 P_\gamma^{-1} ([x,y]) = \left(H\setminus U_a(x) \right) \cap \left( H\setminus U_b(y) \right). 
  \end{align}
First let $z\in  P_\gamma^{-1} ([x,y])$. As the function $u\mapsto d(u,z)$ on $(\gamma,d)$ is convex and attains its unique minimum in $[x,y]$ we have that $d(x,z)\leq d(u,z)$ for all $u\in [a,x]$. Hence $P_{[a,x]}z=x$, i.e. $z\notin U_a(x) $. Likewise, $d(y,z)\leq d(u,z)$ for all $u\in [y,b]$ shows $P_{[b,y]}z=y$ and hence $z\notin U_b(y)$.\\
Now suppose $z$ belongs to the right hand side in \eqref{eq:preimage_of_closed_interval}. Then $P_{[a,x]}z=x$ and therefore $d(z, P_\gamma z)\leq d(z,x) < d(z,u)$ for all $u\in [a,x)$. Likewise, $P_{[b,y]}z=y$ yields $d(z, P_\gamma z)\leq  d(z,y) < d(z,u)$ for all $u\in (y,b]$. Hence 
\[ P_\gamma z\in \gamma\setminus ([a,x) \cup (y,b])= [x,y]. \]
This proves \eqref{eq:preimage_of_closed_interval} and therefore the first statement. \\ 
To prove the second statement suppose that $\tau$ is a topology on $H$, such that $P_\gamma\colon (H,\tau)\rightarrow (\gamma,d)$ is continuous for all $\gamma\in \Gamma(H)$. Let $x,y\in X$. The subset $[x,y)$ is open in $([x,y],d)$. Hence $U_x(y)=P_{[x,y]}^{-1}([x,y))\in \tau.$ Therefore, $\tau_g\subseteq \tau.$ 
\end{proof}

\begin{corollary}[Hausdorff property]\label{cor:haus}
$(H,\tau_g)$ satisfies the Hausdorff property.
\end{corollary}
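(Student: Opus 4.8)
The plan is to prove that $(H,\tau_g)$ is Hausdorff by exhibiting, for any two distinct points $x,z \in H$, a pair of disjoint $\tau_g$-open sets separating them. The natural candidates are elementary sets, which are $\tau_g$-open by the very definition of $\tau_g$ as the coarsest topology containing all $U_x(y)$ with $x \neq y$. The key separation tool is already available: the last statement of Lemma~\ref{prop:elementary_incl} says that for $a \in [x,y]$ strictly between $x$ and $y$, the sets $U_x(a)$ and $U_y(a)$ are disjoint.

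Concretely, given distinct $x,z \in H$, I would set $y := z$ and let $m$ be the midpoint of the geodesic $[x,z]$, so that $m \in [x,z]$ is different from both $x$ and $z$. Applying Lemma~\ref{prop:elementary_incl} with the roles of $x,y,a$ played by $x,z,m$ yields that $U_x(m)$ and $U_z(m)$ are disjoint. It remains to check that $x \in U_x(m)$ and $z \in U_z(m)$: by definition $U_x(m) = \{w \in H : P_{[x,m]}w \neq m\}$, and since $P_{[x,m]}x = x \neq m$, indeed $x \in U_x(m)$; symmetrically $z \in U_z(m)$. Both sets are elementary sets and hence $\tau_g$-open, so they constitute the required disjoint open neighborhoods of $x$ and $z$.

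An alternative, essentially equivalent, route is to invoke Lemma~\ref{lem:proj}: since $\tau_g$ is the initial topology of the continuous projections $P_\gamma \colon (H,\tau_g) \to (\gamma,d)$ and each target $(\gamma,d)$ is a metric (hence Hausdorff) space, the Hausdorff property would follow provided the family of projections separates points. Taking $\gamma = [x,z]$, one has $P_\gamma x = x \neq z = P_\gamma z$, so the projections do separate points, and a standard fact about initial topologies (an initial topology from a point-separating family into Hausdorff spaces is Hausdorff) completes the argument.

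I expect no serious obstacle here; the statement is essentially a corollary that repackages facts already established. The only point requiring minor care is confirming that the separating midpoint $m$ is genuinely distinct from both endpoints—which holds because $x \neq z$ forces the geodesic $[x,z]$ to be nondegenerate—so that Lemma~\ref{prop:elementary_incl} applies with $a$ strictly interior. The direct argument via disjointness of $U_x(m)$ and $U_z(m)$ is the cleaner of the two and is the one I would write up.
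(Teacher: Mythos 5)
Your proposal is correct, and the argument you actually choose to write up differs from the paper's, while your ``alternative route'' is precisely the paper's proof. The paper argues via Lemma~\ref{lem:proj}: $\tau_g$ is the initial topology of the projections $P_\gamma\colon (H,\tau_g)\to(\gamma,d)$, each target is a metric (hence Hausdorff) space, and the family separates points because $P_{[x,y]}x = x \neq y = P_{[x,y]}y$; this is exactly why the statement is placed as a corollary of Lemma~\ref{lem:proj}. Your primary argument instead exhibits explicit disjoint $\tau_g$-open neighborhoods: with $m$ the midpoint of $[x,z]$, the elementary sets $U_x(m)$ and $U_z(m)$ are disjoint by the last statement of Lemma~\ref{prop:elementary_incl}, contain $x$ and $z$ respectively (since $P_{[x,m]}x = x \neq m$ and $P_{[z,m]}z = z \neq m$), and are $\tau_g$-open by the very definition of $\tau_g$. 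Your route is more elementary and self-contained: it bypasses Lemma~\ref{lem:proj} entirely, and the midpoint separation device it uses is the same one the paper itself deploys later in the proof of Proposition~\ref{lem:g-compact}. What the paper's route buys is economy once Lemma~\ref{lem:proj} is in place, plus a point-separation argument that is reused verbatim for the injectivity of $\Phi$ in Proposition~\ref{prop:homeo} and of $\Phi_D$ in Theorem~\ref{thm:comp_implies_seq}; note also that the initial-topology formulation yields Hausdorffness without needing the geodesic-midpoint geometry at all. Both proofs are complete and correct.
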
 
\begin{proof}
Due to Lemma \ref{lem:proj} and since $(\gamma,d)$ is Hausdorff for all $\gamma\in \Gamma(H)$, it suffices to show that the family of projections $P_\gamma\colon H \rightarrow \gamma$ separates points. To this end let $x$ and $y$ be distinct points in $H$. Then $P_{[x,y]} x=x \neq y=P_{[x,y]} y.$
\end{proof}
Another consequence is that we can describe convergence of nets in $\tau_g$ and compare it to weak convergence. 
\begin{corollary}[convergence in $\tau_g$]\label{cor:g-convergence}
Let $(x_\alpha)\subseteq H$ be a net and $x\in H$. Then $x_\alpha \to x$ with respect to $\tau_g$ if and only if $P_\gamma x_\alpha \rightarrow P_\gamma x$ with respect to $d$ for all $\gamma\in \Gamma(H)$. In particular, $x_\alpha \to x$ with respect to $\tau_g$ implies $x_\alpha \to x$ weakly.
\end{corollary}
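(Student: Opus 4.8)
The plan is to lean entirely on Lemma~\ref{lem:proj}, which identifies $\tau_g$ as the initial topology induced by the family of projections $P_\gamma\colon H\to(\gamma,d)$, $\gamma\in\Gamma(H)$. The asserted equivalence is then just the standard description of net convergence in an initial topology, and I would split it into the two implications.

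For the forward direction I would argue by continuity alone. If $x_\alpha\to x$ with respect to $\tau_g$, then since each $P_\gamma$ is $\tau_g$-continuous by Lemma~\ref{lem:proj}, continuity immediately gives $P_\gamma x_\alpha\to P_\gamma x$ in $(\gamma,d)$ for every $\gamma\in\Gamma(H)$. Nothing beyond continuity is needed here.

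For the reverse direction, assume $P_\gamma x_\alpha\to P_\gamma x$ for all $\gamma\in\Gamma(H)$ and let $W$ be any $\tau_g$-open set containing $x$. Because $\tau_g$ is the initial topology of the $P_\gamma$, the finite intersections of preimages $P_\gamma^{-1}(V)$ with $V$ open in $\gamma$ form a neighborhood basis, so there is a basic neighborhood $\bigcap_{i=1}^{n}P_{\gamma_i}^{-1}(V_i)\subseteq W$ with each $V_i$ open in $\gamma_i$ and $P_{\gamma_i}x\in V_i$. For each $i$, the convergence $P_{\gamma_i}x_\alpha\to P_{\gamma_i}x$ together with openness of $V_i$ yields an index $\alpha_i$ with $x_\alpha\in P_{\gamma_i}^{-1}(V_i)$ for all $\alpha\succcurlyeq\alpha_i$. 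Choosing $\alpha_0$ above all the $\alpha_i$ (possible since the index set is directed) gives $x_\alpha\in\bigcap_{i=1}^{n}P_{\gamma_i}^{-1}(V_i)\subseteq W$ for $\alpha\succcurlyeq\alpha_0$, whence $x_\alpha\to x$ with respect to $\tau_g$.

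Finally, for the ``in particular'' claim I would specialize the projection characterization to the geodesics $\gamma=[x,y]$ with $y\in H\setminus\{x\}$. Each such $[x,y]$ is a compact geodesic, hence lies in $\Gamma(H)$, and since $x$ is an endpoint of $[x,y]$ one has $P_{[x,y]}x=x$. Thus $\tau_g$-convergence gives $P_{[x,y]}x_\alpha\to P_{[x,y]}x=x$ for every $y\neq x$, which is exactly the definition of weak convergence. I do not expect a real obstacle; the only point demanding care is correctly passing from an arbitrary $\tau_g$-neighborhood of $x$ to a finite intersection of projection-preimages before invoking directedness of the index set to extract the common tail index.
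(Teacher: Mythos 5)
Your proof is correct and follows the same route as the paper: both rest on Lemma~\ref{lem:proj} (identifying $\tau_g$ as the initial topology of the projections $P_\gamma$) together with the standard characterization of net convergence in initial topologies, and both obtain the ``in particular'' claim by specializing to $\gamma=[x,y]$ and using $P_{[x,y]}x=x$. The only difference is that the paper cites the initial-topology convergence fact as immediate, whereas you spell out the basic-neighborhood argument in full---which is fine.
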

\begin{proof}
The first statement is immediate by Lemma \ref{lem:proj} and convergence in initial topologies. For the second statement let $y\in H\setminus \{x\}$, then the first statement yields $P_{[x,y]}x_\alpha \rightarrow P_{[x,y]} x=x$ with respect to $d$. Hence $x_\alpha \to x$ weakly.
\end{proof}
The following result provides several characterizations for when the two weak topologies $\tau_w$ and $\tau_g$ agree.
\begin{thm}\label{thm:proper}
The following statements are equivalent: 
\begin{enumerate} 
\item $\tau_w=\tau_g$. 
\item For all $x,y\in H$ with $x\neq y$ the elementary set $U_x(y)$ is $\tau_w$-open.
\item $P_\gamma: (H,\tau_w) \rightarrow (\gamma,d)$ is continuous for all $\gamma\in \Gamma(H)$. 
\item Weak convergence of a net $(x_{\alpha})$ to a point $x\in H$ implies that $P_{\gamma} x_\alpha \rightarrow P_{\gamma} x$ with respect to $d$ for all $\gamma \in \Gamma(H)$.
\item $\tau_g$ realizes weak convergence of nets. 
\end{enumerate}
\end{thm}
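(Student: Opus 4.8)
The plan is to prove all five equivalences by assembling the already-established machinery into a web of implications, rather than proving a single long cycle. The main tools I would use are: the definition of $\tau_g$ as the coarsest topology containing the elementary sets together with the inclusion $\tau_w\subseteq\tau_g$ from Proposition~\ref{prop:relations}; the identification of $\tau_g$ as the initial topology of the projections (Lemma~\ref{lem:proj}); the description of $\tau_g$-convergence via projections (Corollary~\ref{cor:g-convergence}); Proposition~\ref{prop:w-convergence}; and the uniqueness statement of Theorem~\ref{thm:tau-w-unique}. The single algebraic fact underlying several steps is the identity $U_x(y)=P_{[x,y]}^{-1}([x,y))$ already noted inside the proof of Lemma~\ref{lem:proj}, which rests on $[x,y)$ being relatively open in the compact geodesic $([x,y],d)$.

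First I would establish that $(1)$, $(2)$, $(3)$ are mutually equivalent by closing the cycle $(1)\Rightarrow(3)\Rightarrow(2)\Rightarrow(1)$. For $(1)\Rightarrow(3)$: if $\tau_w=\tau_g$, then Lemma~\ref{lem:proj} asserts precisely that every $P_\gamma$ is continuous on $(H,\tau_g)=(H,\tau_w)$. For $(3)\Rightarrow(2)$: continuity of $P_{[x,y]}\colon(H,\tau_w)\to([x,y],d)$ makes $U_x(y)=P_{[x,y]}^{-1}([x,y))$, the preimage of the relatively open set $[x,y)$, a $\tau_w$-open set. For $(2)\Rightarrow(1)$: if every elementary set is $\tau_w$-open, then since $\tau_g$ is by definition the \emph{coarsest} topology containing all elementary sets we get $\tau_g\subseteq\tau_w$, which together with $\tau_w\subseteq\tau_g$ forces $\tau_w=\tau_g$.

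Next I would handle $(4)$, $(5)$ and link them to the first cluster. The equivalence $(4)\Leftrightarrow(5)$ is essentially a restatement: by Corollary~\ref{cor:g-convergence}, $\tau_g$-convergence of a net is the same as convergence of all projections $P_\gamma x_\alpha\to P_\gamma x$, and $\tau_g$-convergence always implies weak convergence; hence ``$\tau_g$ realizes weak convergence'' collapses to the single implication ``weak convergence $\Rightarrow$ $\tau_g$-convergence,'' which is exactly $(4)$ after rewriting $\tau_g$-convergence through the projections. To bridge to $(1)$ I would prove $(1)\Leftrightarrow(5)$. For $(1)\Rightarrow(5)$: weak convergence implies $\tau_w$-convergence by Proposition~\ref{prop:w-convergence}, hence $\tau_g$-convergence since $\tau_w=\tau_g$, and the converse implication always holds by Corollary~\ref{cor:g-convergence}, so $\tau_g$ realizes weak convergence. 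For $(5)\Rightarrow(1)$ I would simply invoke Theorem~\ref{thm:tau-w-unique}: if $\tau_g$ realizes weak convergence of nets, then this realizing topology must equal $\tau_w$, i.e.\ $\tau_w=\tau_g$. With $(1),(2),(3)$ mutually equivalent, $(4),(5)$ mutually equivalent, and $(1)\Leftrightarrow(5)$ joining the two clusters, the full equivalence follows.

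I do not expect a genuine obstacle, since each implication reduces to a previously proven result; the work is in arranging them economically. The two points deserving care are the preimage identity $U_x(y)=P_{[x,y]}^{-1}([x,y))$ with the relative openness of $[x,y)$ (which powers $(3)\Rightarrow(2)$ and, via Lemma~\ref{lem:proj}, $(1)\Rightarrow(3)$), and the clean but slightly slick use of uniqueness in $(5)\Rightarrow(1)$: one must note that the hypothesis of Theorem~\ref{thm:tau-w-unique} is merely the \emph{existence} of some net-weak-realizing topology, which $(5)$ supplies through $\tau_g$ itself.
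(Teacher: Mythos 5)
Your proof is correct: every implication you use is valid, the preimage identity $U_x(y)=P_{[x,y]}^{-1}([x,y))$ holds (indeed it is exactly the identity the paper uses inside the proof of Lemma~\ref{lem:proj}), and your appeal to Theorem~\ref{thm:tau-w-unique} for $(5)\Rightarrow(1)$ is legitimate, since $(5)$ supplies the existence hypothesis of that theorem with $\tau_g$ itself as the realizing topology. The difference from the paper is only in the wiring of the implication graph, not in the mathematical content: the paper dismisses $(1)\Leftrightarrow(2)$ as evident from the definition of $\tau_g$ and then proves the single chain $(1)\Rightarrow(3)\Rightarrow(4)\Rightarrow(5)\Rightarrow(1)$, using Proposition~\ref{prop:w-convergence} for $(3)\Rightarrow(4)$ and Corollary~\ref{cor:g-convergence} for $(4)\Rightarrow(5)$, whereas you close the small cycle $(1)\Rightarrow(3)\Rightarrow(2)\Rightarrow(1)$, establish $(4)\Leftrightarrow(5)$ as a standalone restatement via Corollary~\ref{cor:g-convergence}, and bridge the two clusters through $(1)\Leftrightarrow(5)$. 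The toolkit is identical in both arguments (Lemma~\ref{lem:proj}, Proposition~\ref{prop:w-convergence}, Corollary~\ref{cor:g-convergence}, Theorem~\ref{thm:tau-w-unique}, the inclusion $\tau_w\subseteq\tau_g$ from Proposition~\ref{prop:relations}, and the characterization of $\tau_g$ as the coarsest topology containing the elementary sets). What your arrangement buys is transparency: the step $(3)\Rightarrow(2)$ surfaces the preimage identity that the paper leaves buried in Lemma~\ref{lem:proj}, and treating $(4)\Leftrightarrow(5)$ as a pure restatement makes clear that the only substantive direction anywhere is ``weak convergence implies $\tau_g$-convergence.'' What the paper's chain buys is economy: five implications total, each a one-line citation. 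Both are complete proofs.
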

\begin{proof}
Due to Proposition \ref{prop:relations} we always have $\tau_w\subseteq  \tau_g$. Since $\tau_g$ is generated by elementary sets the equivalence of $(1)$ and $(2)$ is evident. Moreover, we have:
\item[$(1)\Rightarrow(3)$:] See Lemma \ref{lem:proj}.
\item[$(3)\Rightarrow(4)$:] This implication follows from Proposition \ref{prop:w-convergence}.
\item[$(4)\Rightarrow(5)$:] In view of the last statement in Corollary \ref{cor:g-convergence} it suffices to show that weak net-convergence implies net-convergence in $\tau_g$. To this end let $(x_\alpha)\subseteq H$ be a net that converges weakly to $x\in H.$ By $(4)$ we obtain $P_\gamma x_\alpha \to P_\gamma x$ in $d$ for all $\gamma\in \Gamma(H)$. Hence the first statement in Corollary \ref{cor:g-convergence} yields $x_\alpha\to x$ with respect to $\tau_g$. 
\item[$(5)\Rightarrow(1)$:] See Theorem~\ref{thm:tau-w-unique}.
\end{proof}

\begin{example}[book of triangles]\label{ex:book-triang}
The following example provides an instance where  $\tau_{w}\neq \tau_{g}$.
 
Consider a Euclidean isosceles right triangle $T$. Label one of its catheti as $a$. Let $A$ denote the vertex at the right angle, let $B$ denote the other vertex on $a$, and let $C$ denote the remaining vertex. Now consider the $\mathrm{CAT(0)}$ space $H$ consisting of countably many copies of $T$ that are isometrically glued along $a$. Enumerate the copies of $T$ in $H$ by $T_{1}, T_{2}, \dots$. For each $n \in \mathbb{N}$, let $C_{n}\in T_{n}$ denote the vertex corresponding to $C\in T$. Due to glueing, the vertices $A$ and $B$ belong to every triangle $T_{n}$. 

Let $P$ denote the midpoint of the hypothenuse in $T_{1}$. Then the elementary set $U_{P}(B)$ contains $A$. 
We claim that there is no finite intersection of elementary sets around $A$  that is contained in $U_{P}(B)$. In order to see this, choose some point $D\neq A$ in $H$. Then either $D \in T_{n}$ for all $n \in \mathbb{N}$ or $D \in T_{k}$ for exactly one $k\in \mathbb{N}$.  Consider the elementary set $U_{A}(D)$. The projection of $C_{n}$ to the geodesic $[A, D]$ is equal to $A$ for for all $n \in \mathbb{N}$ except possibly for $n=k$. Hence, any finite intersection of elementary sets around $A$ contains all but finitely many of the points $C_{1}, C_{2}, \dots$. On the other hand, none of the points $C_{2}, C_{3}, \dots $ belong to $U_{P}(B)$, since their projection onto the geodesic $[P,B]$ is equal to $B$. This proves the claim. In particular, the elementary set $U_{P}(B)$ is not $\tau_{w}$-open; hence $\tau_{w}\neq \tau_{g}$.
\end{example}

\begin{remark}
Consider the compact space that consists of the first three pages of the book of triangles constructed in Example~\ref{ex:book-triang}. On this space, we have that $\tau_{co}=\tau_{w}=\tau_{g}=\tau_{d}$, since the coconvex topology agrees with the metric topology on compact spaces, see~\cite[Lemma 17]{Monod}. However, even for this simple case property $(N)$ from Remark~\ref{rem:geod-monotone} fails to hold. Indeed, the projection of $C_{2}$ and $C_{3}$ to the geodesic $[P,B]$ is equal to $B$, whereas the midpoint of $[C_{2}, C_{3}]$ is $A$, which projects to $P$.
\end{remark}

\begin{remark}
Theorem~\ref{thm:proper} suggests that the coincidence of $\tau_w$ and $\tau_g$ might be a structurally interesting property of $\mathrm{CAT}(0)$ spaces since it might perhaps serve as an analogue of reflexivity in the category of Banach spaces. For example,  Theorem~\ref{thm:loc} shows that $\tau_{w}=\tau_{g}$ on all locally compact $\mathrm{CAT}(0)$ spaces.\\ 
Nonetheless, it is not clear to us whether the coincidence $\tau_{w}=\tau_{g}$ on a $\mathrm{CAT}(0)$ space $(H,d)$ is inherited by closed convex subsets $C\subset H$. While it is not hard to see that this is true if $C$ is bounded, we are not able to prove this permanence property for unbounded $C$. In particular, it is not clear to us whether for an unbounded net $(x_\alpha)\subset C$ and $x\in C$ weak convergence $x_\alpha\rightarrow x$ in the $\mathrm{CAT}(0)$ space $(C,d)$ implies weak convergence in $(H,d)$. 
\end{remark}

\subsection{Compactness in \texorpdfstring{$\tau_g$}{taug} }
In this section we study compactness properties of $\tau_g$. We start with observing that $\tau_g$-compactness implies the equality of $\tau_g$ and $\tau_w$. 
\begin{proposition}[compactness in $\tau_g$] \label{lem:g-compact}
If $(H,\tau_g)$ is compact, then $\tau_g=\tau_w$. 
\end{proposition}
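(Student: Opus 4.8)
The plan is to use the equivalence $(1)\Leftrightarrow(2)$ in Theorem~\ref{thm:proper}. Since $\tau_w\subseteq\tau_g$ always holds by Proposition~\ref{prop:relations}, it suffices to prove $\tau_g\subseteq\tau_w$, and for that it is enough to show that every elementary set $U_x(y)$ with $x\neq y$ is $\tau_w$-open. Equivalently, taking complements, I would show that $C:=H\setminus U_x(y)=P_{[x,y]}^{-1}(\{y\})$ is $\tau_w$-closed. The natural tool is the net-characterization of $\tau_w$-closed sets in Proposition~\ref{lem:w-closed-sets}, which reduces the task to the following claim: whenever a net $(z_\alpha)\subseteq C$ converges weakly to some $p\in H$, then $p\in C$.

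To verify this, I would exploit compactness as follows. The set $C$ is $\tau_g$-closed, being the complement of the $\tau_g$-open set $U_x(y)$, and hence $\tau_g$-compact by hypothesis. Consequently the net $(z_\alpha)$ admits a subnet $(z_\beta)$ that $\tau_g$-converges to some $q\in C$. By Corollary~\ref{cor:g-convergence}, $\tau_g$-convergence implies weak convergence, so $z_\beta\to q$ weakly; on the other hand $(z_\beta)$, being a subnet of the weakly convergent net $(z_\alpha)$, also converges weakly to $p$. Uniqueness of weak limits then forces $p=q\in C$, which is exactly what is needed. Uniqueness of weak limits is itself elementary: if $z_\beta\to p$ and $z_\beta\to q$ weakly with $p\neq q$, then applying Definition~\ref{def:weak-conv} along the geodesic $[p,q]$ (with $y=q$ for the limit $p$, and with $y=p$ for the limit $q$) shows that $P_{[p,q]}z_\beta$ converges in $([p,q],d)$ to both $p$ and $q$, which is impossible.

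Once $C$ is shown to be $\tau_w$-closed, the elementary set $U_x(y)$ is $\tau_w$-open for all $x\neq y$, so condition $(2)$ of Theorem~\ref{thm:proper} holds and therefore $\tau_w=\tau_g$.

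The main obstacle, and the reason I would avoid the naive route, is the following. The most tempting argument is that the identity map $(H,\tau_g)\to(H,\tau_w)$ is a continuous bijection out of a compact space, so it ought to be a homeomorphism; but this conclusion requires the target $(H,\tau_w)$ to be Hausdorff, which is not available a priori, since Hausdorffness of $\tau_w$ is exactly one of the delicate conditions studied in Theorem~\ref{thm:tau-w-exist} and Corollary~\ref{cor:bounded_space_w}. The key feature of the approach above is that it sidesteps any prior Hausdorffness assumption on $\tau_w$: uniqueness of weak limits, which holds unconditionally, plays the role that the Hausdorff property would otherwise play, and $\tau_g$-compactness is used precisely to manufacture the $\tau_g$-convergent---and hence weakly convergent---subnet that pins the weak limit down inside $C$.
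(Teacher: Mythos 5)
Your proof is correct, and it takes a genuinely different route from the paper's. Both arguments reduce, via Theorem~\ref{thm:proper}, to showing that each elementary set $U_x(y)$ is $\tau_w$-open, and both exploit that the complement $A=H\setminus U_x(y)$ is $\tau_g$-closed, hence $\tau_g$-compact. But the paper uses the finite-subcover face of compactness together with the geometric separation statement in Lemma~\ref{prop:elementary_incl}: for a fixed $z\in U_x(y)$ and each $a\in A$, the midpoint $m_a$ of $[z,a]$ gives disjoint elementary sets $U_a(m_a)\ni a$ and $U_z(m_a)\ni z$; a finite subcover $U_{a_1}(m_{a_1}),\dots,U_{a_n}(m_{a_n})$ of $A$ then yields the explicit $\tau_w$-neighborhood $U_z(m_{a_1})\cap\dots\cap U_z(m_{a_n})\subseteq U_x(y)$. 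You instead use the convergent-subnet face of compactness: the net characterization of $\tau_w$-closed sets (Proposition~\ref{lem:w-closed-sets}), the fact that $\tau_g$-convergence implies weak convergence (Corollary~\ref{cor:g-convergence}), and the unconditional uniqueness of weak limits, which you verify correctly by projecting onto the geodesic $[p,q]$. The paper's argument is more hands-on and exhibits the finite intersection of elementary sets witnessing openness directly; yours is more abstract, reuses the net machinery already built in Section~\ref{sec:tau-w}, and makes explicit the point that weak-limit uniqueness substitutes for the Hausdorff property of $\tau_w$ --- your closing remark about why the naive compact-to-Hausdorff bijection argument is unavailable is exactly the right diagnosis.
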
 
\begin{proof}
Let $x,y\in H$ with $x\neq y$. We prove that the elementary set $U_x(y)$ is $\tau_w$-open, which implies the result due to Theorem \ref{thm:proper}.  First notice that 
\[A= H\setminus U_x(y)\] is $\tau_g$-closed. Therefore, $A$ is $\tau_g$-compact. Now let $z\in U_x(y)$. For $a\in A$ let $m_a$ be the midpoint between $z$ and $a.$ Clearly, $a\in U_a(m_a)$ and $U_a(m_a)$ is $\tau_g$-open. Moreover, $U_a(m_a)\cap U_z(m_a)=\emptyset$ (see Lemma \ref{prop:elementary_incl}). Since $U_a(m_a)$ for $a\in A$ forms an open cover of $A$ there exists finitely many $a_1,\ldots,a_n\in A$ such that $U_{a_1}(m_{a_1}),\ldots,U_{a_n}(m_{a_n})$ cover $A$. Hence 
\[ U_z(m_{a_1}) \cap \ldots \cap U_z(m_{a_n})\subseteq H\setminus A = U_x(y), \] which proves $\tau_w$-openness of $U_x(y)$.
\end{proof}
\begin{corollary}
Suppose that $(H,d)$ is bounded. Then $(H,\tau_g)$ is compact if and only if $\tau_g=\tau_w$.  
\end{corollary}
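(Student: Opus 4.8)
The plan is to treat the two implications separately and observe that each follows from a result already in hand, with the boundedness hypothesis entering only in one of the two directions.

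For the forward implication, I would argue that if $(H,\tau_g)$ is compact then $\tau_g=\tau_w$ is exactly the content of Proposition~\ref{lem:g-compact}, which I would simply invoke. This direction requires no boundedness assumption at all, so nothing further is needed here.

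For the converse, I would use boundedness of $(H,d)$ in a single step. The whole space $H$ is trivially $\tau_w$-closed, and it is bounded by hypothesis. Hence Lemma~\ref{lem:w-compact} applies with $K=H$, which yields that $H$ is $\tau_w$-compact; equivalently, $(H,\tau_w)$ is a compact topological space. Since $\tau_g=\tau_w$ by assumption, $(H,\tau_g)$ is compact as well.

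The main point to emphasize is that there is no genuine obstacle: both implications are immediate consequences of earlier results. Boundedness is used \emph{only} in the converse direction, precisely to permit the application of Lemma~\ref{lem:w-compact} to $K=H$; the forward direction is the unconditional Proposition~\ref{lem:g-compact}.
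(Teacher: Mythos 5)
Your proof is correct and follows exactly the paper's route: the forward direction is Proposition~\ref{lem:g-compact} and the converse applies Lemma~\ref{lem:w-compact} to $K=H$, with boundedness used only there. Your added observation that boundedness is needed only in the converse direction is a nice clarification, but the argument itself is identical to the paper's.
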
 
\begin{proof}
See Proposition~\ref{lem:g-compact} and  Lemma~\ref{lem:w-compact}. 
\end{proof}

Notice that the \emph{book of triangles} constructed in Example~\ref{ex:book-triang} is bounded but cannot be $\tau_{g}$-compact due to the previous corollary.

In the sequel we show an analogue of the {Eberlein--\v Smulian Theorem} and a metrizability result for $\tau_{g}$-compact subsets of separable CAT($0$) spaces.
Aiming for an application of Tychonoff's theorem we consider the map 
\[ \Phi \colon (H,\tau_g) \rightarrow \prod_{\gamma\in \Gamma(H)}  (\gamma,d) \quad\text{given by} \quad (\Phi(x))_{\gamma}= P_\gamma x.  \]
\begin{proposition}[homeomorphism onto image]\label{prop:homeo}
Let $\Phi(H)\subseteq \prod_{\gamma\in \Gamma(H)}  (\gamma,d)$ carry the subspace topology $\tau_\textrm{im}$. Then    \[ \Phi \colon (H,\tau_g)\rightarrow (\Phi(H),\tau_\textrm{im}) \]  is a homeomorphism. 
\end{proposition}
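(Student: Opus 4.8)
The plan is to prove that $\Phi$ is a homeomorphism onto its image by verifying the three defining properties of a homeomorphism: injectivity, continuity, and openness (equivalently, continuity of the inverse). Since $\Phi$ is by construction a surjection onto $\Phi(H)$, establishing these three facts suffices. The key observation is that all of the work has essentially been done already in Lemma~\ref{lem:proj}, which identifies $\tau_g$ as the initial topology with respect to the family of projections $P_\gamma$, and in Corollary~\ref{cor:haus}, which records that these projections separate points.

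First I would verify injectivity. Suppose $x, y \in H$ with $x \neq y$. As in the proof of Corollary~\ref{cor:haus}, the projection onto the geodesic $[x,y]$ already separates them, since $P_{[x,y]} x = x \neq y = P_{[x,y]} y$. Hence $\Phi(x) \neq \Phi(y)$ in the product, because the two elements differ in the coordinate indexed by $\gamma = [x,y] \in \Gamma(H)$. This shows $\Phi$ is injective, so that $\Phi \colon (H,\tau_g) \to (\Phi(H), \tau_{\mathrm{im}})$ is a bijection.

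Next I would establish that $\Phi$ is continuous. The product $\prod_{\gamma} (\gamma,d)$ carries the product topology, whose subbasis consists of preimages of open sets under the coordinate projections $\pi_\gamma \colon \prod_\delta (\delta,d) \to (\gamma,d)$. By definition of $\Phi$, the composition $\pi_\gamma \circ \Phi$ equals $P_\gamma \colon (H,\tau_g) \to (\gamma,d)$, which is continuous by Lemma~\ref{lem:proj}. Since a map into a product is continuous if and only if each of its coordinate compositions is continuous, $\Phi$ is continuous as a map into the product, and therefore continuous as a map into $\Phi(H)$ with the subspace topology $\tau_{\mathrm{im}}$.

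Finally I would show that the inverse $\Phi^{-1}$ is continuous, which is the step carrying the real content. Here the crucial point is precisely the \emph{second} (minimality) statement of Lemma~\ref{lem:proj}: $\tau_g$ is the \emph{coarsest} topology on $H$ making every $P_\gamma$ continuous. Concretely, I would argue that the family of subbasic sets $\Phi^{-1}\bigl(\pi_\gamma^{-1}(V)\bigr) = P_\gamma^{-1}(V)$, for $\gamma \in \Gamma(H)$ and $V$ open in $(\gamma,d)$, generates exactly $\tau_g$; indeed Lemma~\ref{lem:proj} asserts $\tau_g$ is the initial topology of the family $(P_\gamma)$, which is by definition the topology generated by these preimages. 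Since $\tau_{\mathrm{im}}$ is generated by the traces on $\Phi(H)$ of the product subbasic sets $\pi_\gamma^{-1}(V)$, and $\Phi$ pulls these back exactly to the generators of $\tau_g$, the bijection $\Phi$ carries $\tau_g$ onto $\tau_{\mathrm{im}}$. The subtlety I expect to be the main obstacle, or at least the point requiring care, is to match the subbasis of the subspace topology $\tau_{\mathrm{im}}$ with the subbasis defining the initial topology $\tau_g$, and to confirm that $\Phi$ maps the generating open sets of one topology bijectively onto the generating open sets of the other, so that $\Phi$ is both open and continuous. Once this identification of subbases is made, the homeomorphism property follows immediately, and this is really just the abstract categorical fact that a continuous injection inducing the initial topology is a homeomorphism onto its image endowed with the subspace topology of the target.
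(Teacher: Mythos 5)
Your proposal is correct and follows essentially the same route as the paper: injectivity via the point-separating projection $P_{[x,y]}$ (as in Corollary~\ref{cor:haus}), continuity of $\Phi$ via the universal property of the product topology, and continuity of $\Phi^{-1}$ from the fact that $\tau_g$ is the initial topology of the family $(P_\gamma)$ (Lemma~\ref{lem:proj}). Your unpacking of the last step in terms of matching subbases is just an explicit rendering of the paper's direct appeal to the universal property of the initial topology, namely that $P_\gamma\circ\Phi^{-1}=\pi_\gamma|_{\Phi(H)}$ is continuous for every $\gamma\in\Gamma(H)$.
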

\begin{proof}
The same argument as in the proof of Corollary \ref{cor:haus} shows that $\Phi$ is injective. We write $\pi_\gamma \colon \prod_{\gamma\in \Gamma(H)}  (\gamma,d)\rightarrow (\gamma,d)$ for the natural projections. Then due to Lemma \ref{lem:proj} the maps $\pi_\gamma \circ \Phi=P_\gamma$ are continuous. The universal property of the product spaces yields that $\Phi$ is continuous.  The continuity of the inverse $\Phi^{-1}$ follows from continuity of $P_\gamma\circ \Phi ^{-1}=\pi_\gamma$ together with the universal property of the initial topology $\tau_g$; see Lemma \ref{lem:proj}.
\end{proof}

\begin{thm} \label{thm:comp_implies_seq}
Assume that $(H,d)$ is separable. Let $K\subseteq H$ be a subset. Then $K$ is $\tau_g$-compact if and only if it is $\tau_g$-sequentially compact. In this case, the subspace topology of $\tau_g$ on $K$ is metrizable.
\end{thm}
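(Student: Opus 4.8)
The plan is to use separability to cut the defining family of projections for $\tau_g$ down to a countable one, producing an auxiliary \emph{metrizable} topology $\tau_\rho\subseteq\tau_g$, and then to compare $\tau_\rho$ with $\tau_g$ on $K$ with the help of (sequential) compactness. Concretely, I fix a countable dense set $D=\{d_i\}\subseteq H$ and let $(\gamma_n)$ enumerate the countably many compact geodesics $[d_i,d_j]$ with $d_i\neq d_j$; these lie in $\Gamma(H)$. I then consider $\Psi\colon H\to M:=\prod_n(\gamma_n,d)$, $\Psi(x)_n=P_{\gamma_n}x$. Since $M$ is a countable product of compact metric spaces it is metrizable, and each coordinate $P_{\gamma_n}$ is $\tau_g$-continuous by Lemma~\ref{lem:proj}, so $\Psi$ is $\tau_g$-continuous; let $\tau_\rho$ be the initial topology it induces, so that $\tau_\rho\subseteq\tau_g$. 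The first thing I would prove is that $\Psi$ is injective, i.e.\ the countable family already separates points: given $x\neq y$ I choose $d_i,d_j\in D$ with $d(x,d_i)+d(y,d_j)<d(x,y)$ (possible by density, and forcing $d_i\neq d_j$); because $d_i\in\gamma_n=[d_i,d_j]$ one has $d(P_{\gamma_n}x,x)\le d(d_i,x)$ and similarly for $y$, whence $d(P_{\gamma_n}x,P_{\gamma_n}y)\ge d(x,y)-d(x,d_i)-d(y,d_j)>0$. Thus $\tau_\rho$ is metrizable and Hausdorff, $\Psi$ being a homeomorphism onto a subspace of $M$.

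For the direction \emph{compact $\Rightarrow$ sequentially compact} (and the final metrizability claim) the work is then minimal: if $K$ is $\tau_g$-compact, $\Psi|_K$ is a continuous injection of a compact space into the Hausdorff space $M$, hence a homeomorphism onto its image, so $\tau_g|_K=\tau_\rho|_K$ is metrizable; a metrizable compact space is sequentially compact. This already yields the metrizability assertion of the theorem.

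The crux is the converse, \emph{sequentially compact $\Rightarrow$ compact}. Assuming $K$ is $\tau_g$-sequentially compact, I would again aim to show $\tau_g|_K=\tau_\rho|_K$, for which (as $\tau_\rho\subseteq\tau_g$ and $\tau_g$ is generated by the maps $P_\gamma$) it suffices that every $P_\gamma|_K\colon(K,\tau_\rho)\to(\gamma,d)$ be continuous, i.e.\ sequentially continuous, since $(K,\tau_\rho)$ is metrizable. So let $x_k\to x$ in $\tau_\rho$ with $x_k,x\in K$ and suppose, for contradiction, that $d(P_\gamma x_{k_l},P_\gamma x)\ge\varepsilon$ along a subsequence. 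By $\tau_g$-sequential compactness a further subsequence $\tau_g$-converges to some $z\in K$; Corollary~\ref{cor:g-convergence} then gives $P_{\gamma_n}x_{k_{l_j}}\to P_{\gamma_n}z$ for every $n$, while the same subsequence satisfies $P_{\gamma_n}x_{k_{l_j}}\to P_{\gamma_n}x$; hence $\Psi(z)=\Psi(x)$ and so $z=x$ by injectivity. But then $P_\gamma x_{k_{l_j}}\to P_\gamma z=P_\gamma x$, contradicting the choice of $\varepsilon$. Therefore $\tau_g|_K=\tau_\rho|_K$ is metrizable; and since $\tau_\rho\subseteq\tau_g$, every sequence in $K$ has a $\tau_\rho$-convergent subsequence with limit in $K$, so $(K,\tau_\rho)$ is sequentially compact, hence compact (being metrizable), hence $(K,\tau_g)$ is compact.

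I expect the main obstacle to be exactly this last equality $\tau_g|_K=\tau_\rho|_K$ in the sequentially compact case. The countable family $(\gamma_n)$ does \emph{not} generate $\tau_g$ on all of $H$, so one cannot simply declare $\tau_g$ metrizable; and the naive fix of approximating $P_\gamma$ by $P_{\gamma_n}$ \emph{uniformly in the base point} is false, since perturbing the endpoints of a compact geodesic can move the projection of a far-away point by a bounded-below amount. What rescues the argument is that sequential compactness together with the injectivity of $\Psi$ forces the offending subsequence to return to $x$, thereby circumventing any need for uniform control of the projections.
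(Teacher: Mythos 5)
Your proof is correct, and while your forward direction (compact $\Rightarrow$ sequentially compact, via the countable family of geodesics with endpoints in a dense set, injectivity of the induced map into the countable product, and the compact-to-Hausdorff homeomorphism argument) coincides with the paper's, your converse direction takes a genuinely different route. The paper handles sequentially compact $\Rightarrow$ compact with a short general-topology argument: $(K,d)$ is separable, hence Lindel\"of; since $\tau_g\subseteq\tau_d$, the space $(K,\tau_g|_K)$ is still Lindel\"of; sequential compactness gives countable compactness; and Lindel\"of together with countably compact yields compact. You instead prove the stronger intermediate statement that $\tau_g|_K$ coincides with the metrizable topology $\tau_\rho|_K$ induced by the countable projection family, using sequential compactness plus injectivity of $\Psi$ to force any subsequence violating continuity of some $P_\gamma|_K$ back to the limit $x$ (via Corollary~\ref{cor:g-convergence} and uniqueness of limits in each coordinate), after which compactness follows from ``metrizable $+$ sequentially compact.'' Your argument is longer but buys something the paper's converse does not give directly: it establishes metrizability of $\tau_g|_K$ in the sequentially compact case without first passing through compactness (the paper obtains metrizability only by feeding the deduced compactness back into the forward direction), and it isolates the reusable fact that on any $\tau_g$-sequentially compact set the full projection family is already determined by the countable one. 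The paper's route, in exchange, is shorter and avoids the subspace-initial-topology bookkeeping ($\tau_g|_K$ being the initial topology of the restricted maps $P_\gamma|_K$, and sequential continuity sufficing on a metrizable domain), which your argument must invoke.
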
  
\begin{proof}
First assume that $K$ is $\tau_g$-compact. 
Let $D\subseteq (H,d)$ be countable and dense. Let $\Gamma_D\subseteq \Gamma(H)$ be the set of all $\gamma\in \Gamma(H)$ with both endpoints in $D$. Then $\Gamma_D$ is countable. 
Consider the map 
\[ \Phi_D \colon (H,\tau_{g}) \rightarrow \prod_{\gamma\in \Gamma_D}  (\gamma,d) \quad\text{given by} \quad (\Phi_{D}(x))_{\gamma}= P_\gamma x.  \]
The same argument as in the proof of Proposition \ref{prop:homeo} shows that $\Phi_D$ is continuous. We claim  that $\Phi_D$ is injective. To see this, let $x,y\in H$ with $x\neq y.$ We set $r=d(x,y)$. Let $\gamma$ be the geodesic connecting $x$ and $y$. There exist  \[ x_D, y_D\in D \quad\text{such that}\quad d(x,x_D)< \frac{r}{3} \quad \text{and}\quad d(y,y_D)< \frac{r}{3} .\]
Let $\gamma_D$ be the geodesic connecting $x_D$ and $y_D$.  
From $x_D\in \gamma_D$ we obtain
\[ d(P_{\gamma_D} x, x )\leq d(x_D ,x )< \frac{r}{3}.\] 
Likewise $d(P_{\gamma_D} y, y )< \frac{r}{3}$. With the triangle inequality we deduce 
\[ r= d(x,y)\leq d(x,P_{\gamma_D} x) + d(P_{\gamma_D} x,P_{\gamma_D}y ) + d(P_{\gamma_D} y,y) < \frac{2}{3}r + d(P_{\gamma_D} x,P_{\gamma_D}y ). \]
This implies  $0<\frac{1}{3}r < d(P_{\gamma_D} x,P_{\gamma_D}y )$ and we conclude that $\Phi_D$ is injective.\\ 
Since $K$ is $\tau_g$-compact and $\prod_{\gamma\in \Gamma_D}  (\gamma,d)$ satisfies the Hausdorff property $\Phi_D$ restricts to a homeomorphism  \[ \phi_D |_K \colon (K,\tau_{g})\rightarrow (\Phi_D(K) , \tau_{\textrm{K}}), \]
where $\tau_{\textrm{K}}$ denotes the subspace topology of $\Phi_D(K)$ in $\prod_{\gamma\in \Gamma_D}  (\gamma,d)$. In particular, $\tau_{g}$-compactness of $K$ implies compactness of $(\Phi_D(K) , \tau_{\textrm{K}})$. Being a countable product of metric spaces, the space $\prod_{\gamma\in \Gamma_D}  (\gamma,d)$ is metrizable. Therefore, clearly, the subspace $(\Phi_D(K) , \tau_{\textrm{K}})$ is also metrizable. Metrizability and compactness imply that $(\phi_D(K) , \tau_{\textrm{K}})$ is sequentially compact. Therefore, the subspace topology of $\tau_g$ on $K$ is metrizable and sequentially compact.\\ 
For the converse direction assume that $K$ is $\tau_g$-sequentially compact. Since $(H,d)$ is separable, so is $(K,d)$. Hence $(K,d)$ satisfies the Lindel\"off property. Since $\tau_g$ is coarser than the metric topology also $(K,\tau_g|_K)$ satisfies the Lindel\"off property. Sequential compactness implies that $(K,\tau_g|_K)$ is countably compact. Hence  $(K,\tau_g|_K)$ is compact as it is Lindel\"off and countably compact.
\end{proof}

\section{Locally compact spaces}\label{sec:loc-cpct}

We show that $\tau_w$ agrees with the metric topology for locally compact CAT($0$) spaces. We start with the following geometric lemma. 
\begin{lemma} \label{lem:cone}
Let $x\in H$, $\varepsilon>0$, $K$ be the closed ball of radius $\varepsilon$ around $x$, $P_K$ the projection to it, $y\in K$, and $m$ the midpoint between $x$ and $y$. 
If $z\in H$ satisfies 
\[ d(x,z)>  \varepsilon \quad \text{and} \quad d(y,P_K z) \leq  \frac{\varepsilon}{2}  \quad \text{then}\quad P_{[x,m]}z=m.\]
\end{lemma}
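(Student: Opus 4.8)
The plan is to avoid working with the Alexandrov angle at $m$ altogether. The statement $P_{[x,m]}z=m$ is equivalent to $\angle_m(x,z)\ge\pi/2$, but a \emph{lower} bound on an Alexandrov angle is precisely the quantity that the CAT($0$) comparison does not control (comparison inequalities bound angles from above). The key idea is therefore to trade this angle condition for a single \emph{distance} inequality, which comparison does control in the favorable direction.

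First I would reduce the lemma to the inequality $d(z,m)\ge d(z,y)$. Since $x\ne y$ (otherwise $d(y,P_Kz)=d(x,P_Kz)=\varepsilon>\varepsilon/2$, contradicting the hypothesis), the midpoint $m$ is an interior point of $[x,y]$, so Lemma~\ref{prop:elementary_incl} applies with $a=m$: if $z\in U_x(m)$, then $P_{[x,y]}z\in[x,m)$. Arguing by contradiction, suppose $P_{[x,m]}z\ne m$, that is $z\in U_x(m)$, and set $p:=P_{[x,y]}z\in[x,m)$. Because $u\mapsto d(z,u)^2$ is strictly convex along $[x,y]$ in a CAT($0$) space, it has $p$ as its unique minimizer and is strictly increasing on the subsegment from $p$ to $y$; as $m$ lies strictly between $p$ and $y$ this forces $d(z,m)<d(z,y)$, contradicting $d(z,m)\ge d(z,y)$. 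Hence it suffices to prove $d(z,m)\ge d(z,y)$.

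To establish this inequality I would bound the two distances by comparison in opposite, and now favorable, directions. Write $\lambda=d(x,z)$, $\rho=d(x,m)=\tfrac12 d(x,y)$, and $w=P_Kz$. Since $x\in K$ and $d(x,z)>\varepsilon$, a direct argument (for any $u\in K$, $d(z,u)\ge d(x,z)-d(x,u)\ge\lambda-\varepsilon$, attained on the radial point of $[x,z]$) identifies $w$ as the point of $[x,z]$ at distance $\varepsilon$ from $x$, so $d(z,w)=\lambda-\varepsilon$. For the upper bound on $d(z,y)$ I combine the triangle inequality with the hypothesis $d(y,w)\le\varepsilon/2$:
\[ d(z,y)\le d(z,w)+d(w,y)\le(\lambda-\varepsilon)+\tfrac{\varepsilon}{2}=\lambda-\tfrac{\varepsilon}{2}. \]
For the lower bound on $d(z,m)$ I invoke the CAT($0$) law of cosines at $x$: with $\theta=\angle_x(y,z)=\angle_x(m,z)$ (equal since $m\in[x,y]$), the inequality $\angle_x\le$ (Euclidean comparison angle) yields
\[ d(z,m)^2\ge \rho^2+\lambda^2-2\rho\lambda\cos\theta. \]

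It then remains to verify $\rho^2+\lambda^2-2\rho\lambda\cos\theta\ge(\lambda-\varepsilon/2)^2$, which uses only $\cos\theta\le 1$, $\lambda\ge\varepsilon$, and $2\rho=d(x,y)\le\varepsilon$ (as $y\in K$). The difference of the two sides equals $\rho^2-\tfrac{\varepsilon^2}{4}+\lambda(\varepsilon-2\rho\cos\theta)$; since $\varepsilon-2\rho\cos\theta\ge\varepsilon-2\rho\ge 0$ and $\lambda\ge\varepsilon$, this is bounded below by $\rho^2-2\varepsilon\rho+\tfrac34\varepsilon^2=(\rho-\tfrac{\varepsilon}{2})(\rho-\tfrac{3\varepsilon}{2})\ge 0$ for $\rho\le\varepsilon/2$. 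This proves $d(z,m)\ge d(z,y)$ and hence the lemma. The step I expect to be the crux is the reduction in the second paragraph: recognizing that Lemma~\ref{prop:elementary_incl} converts the projection statement into the comparison $d(z,m)\ge d(z,y)$, thereby replacing an inaccessible lower bound on the angle at $m$ by distance estimates lying on the correct side of the CAT($0$) inequalities. Once this is seen, the only points requiring care are the radial identification of $w$ and the use of strict convexity of $d(z,\cdot)^2$ to turn the non-strict distance inequality into the desired contradiction.
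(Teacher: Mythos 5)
Your proof is correct, and it differs from the paper's in its concluding mechanism, though both rest on the same two elementary estimates: since $d(x,z)>\varepsilon$, the projection $w=P_Kz$ is the radial point of $[x,z]$, so $d(z,w)=d(x,z)-\varepsilon$, and hence $d(z,y)\le d(z,w)+d(w,y)\le d(x,z)-\tfrac{\varepsilon}{2}$. The paper then finishes directly: for every $q\in[x,m)$ it chains $d(y,z)\le d(x,z)-\tfrac{\varepsilon}{2}\le d(x,q)+d(q,z)-\tfrac{\varepsilon}{2}<d(q,z)$ (using $d(x,q)<\tfrac{\varepsilon}{2}$), and plain non-strict convexity of $d(\cdot,z)$ on $[q,y]\ni m$ gives $d(m,z)<d(q,z)$, which pins $P_{[x,m]}z=m$ with no auxiliary lemma. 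You instead reduce everything to the single comparison $d(z,m)\ge d(z,y)$ and convert it into the projection statement by importing Lemma~\ref{prop:elementary_incl} together with strict convexity of $d(z,\cdot)^2$; this is a clean, modular reduction, and your strictness bookkeeping (non-strict hypothesis yielding a strict contradiction) is handled correctly. One criticism: the law-of-cosines step is a detour. Since you only use $\cos\theta\le1$, your lower bound $d(z,m)^2\ge\rho^2+\lambda^2-2\rho\lambda\cos\theta\ge(\lambda-\rho)^2$ (in your notation $\lambda=d(x,z)$, $\rho=d(x,m)$) is exactly the triangle inequality $d(z,m)\ge d(x,z)-d(x,m)$, so no comparison geometry is needed; this also dissolves the angle-bound concern that motivates your opening paragraph, since both your argument and the paper's avoid angles entirely. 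In sum, your route buys reuse of Lemma~\ref{prop:elementary_incl} at the price of invoking strict convexity, while the paper's buys a shorter, self-contained argument from convexity alone.
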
 
\begin{proof}
Notice that $P_{K}z$ is the unique point on $[x,z]$ that satisfies $d(x,P_{K}z)= \varepsilon$. Hence our assumption can only be satisfied if $x\neq y$. Moreover, we have
\[ d(z, P_K z) = d(x,z)-\varepsilon .\] 
%\begin{align}\label{eq:along_the_line}
%d(z, P_K z) \leq   d(x,z)-\varepsilon.
%\end{align}
%Now let $t^\ast\in [0,1]$ such that $P_\gamma z=\gamma(t^\ast).$ 
Let $q\in [x,m)$. Then 
\begin{align*}
d(y, z)  & \leq d(y,P_K z) + d(P_K z , z) \\
& \leq d(x,z) - \frac{\varepsilon}{2}\\ 
& \leq d(x, q) + d(q ,z )- \frac{\varepsilon}{2} < d(q,z).
\end{align*}
Since the function $r \mapsto d(r,z)$ is convex on $[x,y]$ and $m\in (q,y)$ the distance $d(m,z)$ is smaller or equal to a convex combination of $d(q,z)$ and $d(y,z)$. Together with the last inequality we obtain $d(m,z)<d(q,z)$. Therefore, $P_{[x,m]}z=m$.  
\end{proof}

We are now in the position to prove Theorem~\ref{thm:loc}.

\begin{proof}[Proof of Theorem~\ref{thm:loc}]
If $\tau_w$ and the metric topology coincide then closed balls are metrically compact by Lemma \ref{lem:w-compact}. In particular,  $(H,d)$ is locally compact in this case. To prove the converse let $U\subseteq H$ be metrically open and $x\in U$. Then there exists some $\varepsilon>0$ such that the closed ball $K$ of radius $\varepsilon$ around $x$ is compact and contained in $U$. Let $y_1,\ldots,y_n\in K$ be a finite $\frac{\varepsilon}{2}$-net of $K$. With $m_i$ being the midpoint between $x$ and $y_i$ Lemma \ref{lem:cone} implies that
\begin{align}\label{eq:pr_loc}
U_x(m_1) \cap \ldots\cap U_x(m_n) \subseteq K\subseteq U,
\end{align}
since $z\in H\setminus K$ implies $z\notin U_x(m_i)$ for every
 $i\in \{ 1,\ldots n\}$ that sasifies $d(y_i,P_K z)\leq \frac{\varepsilon}{2}$. The last statement of the theorem is due to Theorem~\ref{thm:proper}.
\end{proof}
\begin{remark}
Assuming that $(H,d) $ is locally compact Theorem \ref{thm:loc} yields $\tau_w=\tau_g=\tau_d$. Hence every closed ball in $(H,d)$ is metrically compact due to Lemma \ref{lem:w-compact}. This reproves the well known fact, that a Hadamard space $(H,d)$ is locally compact if and only if it is proper. (Here proper means that every closed ball in $(H,d)$ is compact). 
\end{remark}

\subsection*{Acknowledgements} We would like to thank Russell Luke and Genaro L\'opez Acedo for valuable discussions on weak convergence in CAT($0$) spaces.

 \bibliographystyle{amsplain}
  \bibliography{literature}
\end{document}